\newtheorem{thm}{Theorem}[section]
\newtheorem{cor}[thm]{Corollary}
\newtheorem{lem}[thm]{Lemma}
\theoremstyle{definition}
\newtheorem{defn}[thm]{Definition}
\newtheorem{rem}[thm]{Remark}
\DeclareMathOperator{\N}{\mathbb {N}}
\DeclareMathOperator{\Z}{\mathbb {Z}}
\DeclareMathOperator{\R}{\mathbb {R}}
\DeclareMathOperator{\rank}{rank}
\DeclareMathOperator{\reg}{reg}
\def\alb {\boldsymbol {\alpha}}
\def\btb {\boldsymbol {\beta}}
\def\gmb {\boldsymbol {\gamma}}
\def\x {\mathbf x}
\def\mi {\mathfrak m}
\def\ni {\mathfrak n}
\def\P {\mathcal P}
\def\C {\mathcal C}
\def\h {\widetilde{H}}
\def\H {\mathcal{H}}
\def\V {\mathcal{V}}
\def\E {\mathcal{E}}
\begin{document}

\title[Regularity of powers of cover ideals of unimodular hypergraphs] {Regularity of powers of cover ideals of unimodular hypergraphs}

\author{Nguyen Thu Hang}
\address{Thai Nguyen College of Sciences, Thai Nguyen University, Thai Nguyen, Vietnam}
\email{nguyenthuhang0508@gmail.com}

\author{Tran Nam Trung}
\address{Institute of Mathematics, VAST, 18 Hoang Quoc Viet, Hanoi, Viet Nam}
\email{tntrung@math.ac.vn}

\subjclass{13A15, 13C13.}
\keywords{Regularity, cover ideals,  powers of ideals}
\date{}

\dedicatory{Dedicated to Professor Le Tuan Hoa on his 60th birthday}
\commby{}
\begin{abstract} Let $\H$ be a unimodular hypergraph over the vertex set $[n]$ and let $J(\H)$ be the cover ideal of $\H$ in the polynomial ring $R=K[x_1,\ldots,x_n]$. We show that $\reg J(\H)^s$ is a linear function in $s$ for all $s\geqslant r\left\lceil \frac{n}{2}\right\rceil+1$ where $r$ is the rank of $\H$. Moreover for every $i$, $a_i(R/J(\H)^s)$ is also a linear function in $s$ for $s \geqslant n^2$. 
\end{abstract}

\maketitle
\section*{Introduction}

Let $R := K[x_1,\ldots,x_n]$ be a polynomial ring over a field $K$ and  $\mi := (x_1,\ldots,x_n)$ the maximal homogeneous ideal of $R$. Let $M$ be a finitely generated graded $R$-module. For each $i=0,\ldots,\dim M$ we define the $a_i$-{\it invariant} of $M$ by
$$a_i(M) :=\max\{t\mid H_{\mi}^i(M)_t \ne 0\},$$
where $H_{\mi}^i(M)$ is the $i$-th local cohomology module of $M$ with support in $\mi$ (with the convention $\max \emptyset = -\infty$); and the {\it Castelnuovo-Mumford regularity} (or {\it regularity} for short) of $M$ is defined by
$$\reg(M) := \max\{a_i(M)+i \mid i=0,\ldots,\dim M\}.$$

Let $I$ be a homogeneous ideal in  $R$. It is well-known that $\reg I^s$ is a linear function in $s$ for $s$ big enough (see \cite{CHT,K, TW}). More precisely, there are non-negative integers $d, e$ and $s_0$ such that
$$\reg I^s = ds+e \text{ for all } s\geqslant s_0.$$

While $d$ can be determined in terms of generators of $I$ (see \cite{K}), there are no good interpretations for $e$ and $s_0$. Two natural questions arise from  this result (see \cite{EU}):
\medskip
\begin{enumerate}
\item What is the nature of the number $e$?
\item What is a reasonable bound for $s_0$?
\end{enumerate}
\medskip

These problems are continue to attract us (see \cite{ABS, Ba, Ber, BHT, C, EH, EU, Ha, HTT, JNS, MT2}). When $I$ is generated by forms of the same degree, there is a geometric interpretation for $e$ (see \cite{C, EH, Ha}). Effective bounds of $s_0$ are known only for a few special classes of ideals $I$, such as edge ideals of forests and unicyclic graphs (see \cite{ABS, BHT, JNS}), $\mi$-primary ideals (see \cite{Ber, C1}).

Notice that
$$\reg I^s = 1 + \reg R/I^s =1+\max\{a_i(R/I^s)+i \mid i=0,\ldots,\dim R/I\},$$
it is also natural to ask whether $a_i(R/I^s)$ is asymptotically linear in $s$ or not. However, there is an example such that $\lim_{s\to \infty}\dfrac{\reg \widetilde{I^s}}{s}$ is an irrational number (see \cite{Ckk}), so that $a_i(R/I^s)$ need not to be linear when $s$ large. If we assume furthermore that $I$ is a monomial ideal then $a_i(R/I^s)$ is a quasi-linear function in $s$ due to \cite{HT2}, but we still do not know whether $a_i(R/I^s)$ is asymptotically linear  in $s$ or not.

In this paper when $I$ is the cover ideal of a unimodular hypergraph, we will prove that $a_i(R/I^s)$ is actually asymptotically linear  in $s$. Using this result we are able to give a reasonable bound of $s_0$ for which $\reg I^s$ is a linear function in $s$ for all $s\geqslant s_0$.

Before stating our result we recall some terminology from graph theory (see \cite{Berge} for more detail).  Let $\V = [n]:=\{1,\ldots,n\}$, and let $\E$ be a family of distinct nonempty subsets of $\V$. The pair $\H=(\V,\E)$ is called {\it a hypergraph} with vertex set $\V$ and edge set $\E$. The rank of  $\H$, denoted by $\rank(\H)$, is the maximum cardinality of any of the edges in $\H$. Notice that a hypergraph generalizes the classical notion of a graph; a graph is a hypergraph of rank at most $2$. One may also define a hypergraph by its incidence matrix $A(\H) = (a_{ij})$, with columns representing the edges $E_1,E_2, \ldots,E_m$ and rows representing the vertices $1,2,\ldots, n$ where $a_{ij} = 0$ if $i\notin E_j$ and  $a_{ij} = 1$ if $i\in E_j$. A hypergraph $\H$ is said to be {\it unimodular} if its incident matrix is {\it totally unimodular}, i.e., every square submatrix of $A(\H)$ has determinant equal to $0, 1$ or $-1$. 

A {\it vertex cover} of $\H$ is a subset of $\V$ which meets every edge of $\H$; a vertex cover is {\it minimal} if none of its proper subsets is itself a cover. For a subset  $\tau = \{i_1,\ldots,i_t\}$ of $\V$, set $\x_{\tau} := x_{i_1}\cdots x_{i_t}$. The {\it cover ideal} of $\H$ is then defined by 
$$J(\H) := (\x_{\tau} \mid \tau \text{ is a minimal vertex cover of } \H).$$

It is well-known that there is one-to-one correspondence between squarefree monomial ideals of $R$ and cover ideals of hypergraphs on the vertex set $\V$.

\medskip

The first main result of this paper is the following theorem.

\medskip

\noindent{\bf Theorem \ref{T1}} {\it For any $i$, we have either $a_i(R/J(\H)^s)=-\infty$ for all $s\geqslant 1$ or  there are positive integers $d$ and $e$ with $d\leqslant e$ such that $a_i(R/J(\H)^s)=ds-e$ for all $s\geqslant n^2$.
}

\medskip

We next address the question of bounding $s_0$ for which $\reg I^s$ is a linear function in $s$ whenever $s\geqslant s_0$.   Let $d(J(\H))$ be the maximal degree of minimal monomial generators of $J(\H)$.  Then, the main result of the paper is the following theorem.

\medskip

\noindent{\bf Theorem \ref{T2}} {\it Let $\H$ be a unimodular hypergraph with $n$ vertices and rank $r$. Then there is a non-negative integer $e\leqslant \dim R/J(\H)-d(J(\H))+1$ such that $\reg J(\H)^s = d(J(\H))s+e$ for all $s\geqslant r\left\lceil\frac{n}{2}\right\rceil+1$.
}

\medskip

When $\H$ is a bipartite graph $G$ with $n$ vertices, it is unimodular by \cite[Theorem $5$]{Berge}. The theorem \ref{T2} now says that $\reg(J(G)^s)$ is a linear function in $s$ for all $s\geqslant n+2$.

\medskip

Our approach is based on a generalized Hochster's formula for computing local cohomology modules of arbitrary monomial ideals formulated by Takayama \cite{T}.  Using this formula we are able to investigate the $a_i$-invariants of powers of monomial ideals via the integer solutions of certain systems of linear inequalities. This allows us to use the theory of integer programming as the key role in this paper (see e.g. \cite{HT1, HKTT, HT2} for this approach).

\medskip

The paper is organized as follows. In Section $1$, we set up some basic notation and terminology for simplicial complex, the relationship between simplicial complexes and cover ideals of hypergraphs; and give a generalization of Hochster's formula for computing local cohomology modules. In Section $2$, we investigate the integer solutions of systems of linear inequalities with totally unimodular matrices. In Section $3$,  we prove that $a_i(R/J(\H)^s)$ is an asymptotically linear function in $s$, and settle the problems when $\reg J(\H)^s$  becomes a linear function in $s$, where $\H$ is a unimodular hypergraph.

\section{Preliminary}

Let $K$ be a field and let $R := K[x_1,\cdots,x_n]$ be a polynomial ring over $n$ variables. We first recall a relationship between cover ideals of hypergraphs and simplicial complexes. A {\it simplicial complex} on $\V = \{1,\ldots, n\}$ is a collection of subsets of $\V$ such that if $\sigma\in \Delta $ and $\tau\subseteq \sigma$ then $\tau\in \Delta$. 

\begin{defn} The Stanley-Reisner ideal associated to a simplicial complex $\Delta$ is the squarefree monomial ideal
$$I_{\Delta}:=(x_\tau \mid \tau \notin \Delta) \subseteq R.$$
\end{defn}
Note that  if $I$ is a squarefree monomial ideal, then it is a Stanley-Reisner ideal of the simplicial complex $\Delta(I) := \{ \tau \subseteq \V \mid \x_{\tau} \notin I\}$. If $I$ is a monomial ideal (maybe not squarefree) we also use $\Delta(I)$ to denote the simplicial complex corresponding to the squarefree monomial ideal $\sqrt{I}$.

Let $\mathcal{F}(\Delta)$ be the set of facets of $\Delta$. If $\mathcal{F}(\Delta) =\{F_1,\ldots,F_m\}$, we write $\Delta = \left<F_1,\ldots,F_m\right>$.  Then, $I_{\Delta}$ has the primary-decomposition (see \cite[Theorem $1.7$]{MS}):
$$I_{\Delta} = \bigcap_{F\in\mathcal F(\Delta)} (x_i\mid i\notin F).$$

For $n\geqslant 1$, the $n$-th symbolic power of $I_{\Delta}$ is
$$I_{\Delta}^{(n)} = \bigcap_{F\in\mathcal F(\Delta)} (x_i\mid i\notin F)^n.$$

Let $\H = (\V, \E)$ be a hypergraph. Then, the cover ideal of $\H$ can be written as
\begin{equation} \label{intersect}
J(\H) = \bigcap_{E\in\E} (x_i\mid i\in E).
\end{equation}
By this formula, when considering $J(\H)$ without loss of generality we may assume that $\H$ is {\it simple }, i.e., whenever $E_i, E_j \in \E$ and $E_i \subseteq E_j$, then $E_i = E_j$. In this case,  $J(\H)$ is a Stanley-Reisner ideal with 
\begin{equation}\label{complex-cover}
\Delta(J(\H)) = \left<\V\setminus E \mid E\in\E\right>.
\end{equation}

Let $I$ be a non-zero monomial ideal. Since $R/I$ is an $\mathbb N^n -$ graded algebra, $H^i_{\frak m}(R/I)$ is an $\mathbb Z^n$-graded module over $R/I$ for every $i$. For each degree $\alb=(\alpha_1,\ldots,\alpha_n)\in\Z^n$, in order to compute $\dim_K H_{\mi}^i(R/I)_{\alb}$ we use a formula given by Takayama \cite[Theorem $2.2$]{T} which is a generalization of Hochster's formula for the case $I$ is squarefree \cite[Theorem 4.1]{HO}.

Set $G_{\alb}:=\{i\mid \alpha_i<0\}$. For a subset $F\subseteq \V$, we let $R_F:=R[x_i^{-1}\mid i\in F\cup G_{\alb}]$. Define the simplicial complex $\Delta_{\alb}(I)$ by
\begin{equation}\label{degree-complex}
\Delta _{\alpha }(I) :=\{F\subseteq \V\setminus G_{\alb}\mid x^{\alpha }\notin IR_F\}.
\end{equation}

\begin{lem} \label{TA}\cite[Theorem 2.2]{T}  $\dim_K {H_{\frak m}^i(R/I)_{\alb}}=\dim_K \widetilde{H}_{i-\mid G_{\alpha }\mid-1 }(\Delta _{\alb}(I);K).$ 
\end{lem}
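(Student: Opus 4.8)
The plan is to compute $H^i_{\mi}(R/I)$ with the \v{C}ech complex on $x_1,\dots,x_n$ and to isolate its $\Z^n$-graded strand in degree $\alb$. Recall that $H^i_{\mi}(R/I)=H^i(\check C^{\bullet})$, where $\check C^{\bullet}$ is the complex
$$0\to R/I\to \bigoplus_{i} (R/I)_{x_i}\to\bigoplus_{i<j}(R/I)_{x_ix_j}\to\cdots\to (R/I)_{x_1\cdots x_n}\to 0$$
placed in cohomological degrees $0,1,\dots,n$; thus $\check C^{\,t}=\bigoplus_{\sigma\subseteq[n],\ |\sigma|=t}(R/I)_{\x_{\sigma}}$, with $\x_\sigma=\prod_{i\in\sigma}x_i$, and the differential is the signed sum of the natural localization maps $(R/I)_{\x_{\sigma}}\to (R/I)_{\x_{\sigma\cup\{k\}}}$. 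Since $R/I$ is $\N^n$-graded and each localization is $\Z^n$-graded, $\check C^{\bullet}$ decomposes as the direct sum over $\alb\in\Z^n$ of its graded strands $(\check C^{\bullet})_{\alb}$, so it suffices to identify each strand as a complex of $K$-vector spaces.

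Next I would pin down the pieces. For $\sigma\subseteq[n]$ the localization $R_{\x_{\sigma}}$ is the monomial ring $K[x_i,x_j^{-1}\mid i\in[n],\ j\in\sigma]$; its degree-$\alb$ component is one-dimensional, spanned by the monomial $x^{\alb}$, exactly when $\alpha_i\geqslant 0$ for every $i\notin\sigma$, i.e. when $G_{\alb}\subseteq\sigma$, and is zero otherwise. Passing to the quotient by the monomial ideal $IR_{\x_{\sigma}}$ leaves $((R/I)_{\x_{\sigma}})_{\alb}$ one-dimensional when $G_{\alb}\subseteq\sigma$ and $x^{\alb}\notin IR_{\x_{\sigma}}$, and zero in every other case. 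Writing such a $\sigma$ as $\sigma=G_{\alb}\sqcup F$ with $F\subseteq[n]\setminus G_{\alb}$ and observing that $R_{\x_{\sigma}}=R_F$ in the notation of \eqref{degree-complex}, this condition becomes $F\in\Delta_{\alb}(I)$. In particular $\Delta_{\alb}(I)$ is genuinely a simplicial complex: if $x^{\alb}\notin IR_F$ and $F'\subseteq F$, then $IR_{F'}\subseteq IR_F$ forces $x^{\alb}\notin IR_{F'}$.

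Assembling these identifications gives $(\check C^{\,t})_{\alb}=\bigoplus_{F\in\Delta_{\alb}(I),\ |F|=t-|G_{\alb}|}K$, which is precisely the group of reduced simplicial $(t-|G_{\alb}|-1)$-cochains of $\Delta_{\alb}(I)$ with coefficients in $K$, the empty face accounting for the augmentation term sitting in cohomological degree $|G_{\alb}|$. It then remains to check that under this identification the differential of $(\check C^{\bullet})_{\alb}$ goes over to the simplicial coboundary of $\Delta_{\alb}(I)$; this is the same sign bookkeeping as in the proof of the classical Hochster formula \cite{HO}, once one fixes a total order on $[n]$ in which the elements of $G_{\alb}$ come first. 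Taking cohomology yields $H^i_{\mi}(R/I)_{\alb}\cong\widetilde H^{\,i-|G_{\alb}|-1}(\Delta_{\alb}(I);K)$, and since $K$ is a field, universal coefficients gives $\dim_K\widetilde H^{\,j}(\Delta_{\alb}(I);K)=\dim_K\widetilde H_{j}(\Delta_{\alb}(I);K)$ for every $j$, which is the asserted equality.

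I expect the main obstacle to be precisely that last verification: matching the signs of the \v{C}ech differential with those of the simplicial coboundary, together with the correct handling of the degenerate strands — when $\Delta_{\alb}(I)$ is void (no faces whatsoever) versus $\Delta_{\alb}(I)=\{\emptyset\}$, and the corresponding conventions for reduced $(-1)$-st (co)homology. Everything else is formal, the monomial structure guaranteeing that every graded piece in sight is at most one-dimensional.
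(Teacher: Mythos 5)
Your argument is correct, but note that the paper itself gives no proof of this lemma: it simply cites Takayama's Theorem 2.2, so there is no internal proof to compare against. What you wrote is essentially the standard derivation of that theorem — take the \v{C}ech complex on $x_1,\dots,x_n$, use that each localization $(R/I)_{\x_\sigma}$ has degree-$\alb$ piece of dimension at most one, nonzero exactly when $G_{\alb}\subseteq\sigma$ and $x^{\alb}\notin IR_\sigma$, and identify the degree-$\alb$ strand with the reduced cochain complex of $\Delta_{\alb}(I)$ shifted by $|G_{\alb}|+1$ — and all the steps you outline go through; the sign bookkeeping and the void-versus-$\{\emptyset\}$ conventions you flag are the only delicate points and are routine. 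One remark worth making: Takayama's original theorem is stated with a more complicated description of the degree complex (and extra exponent conditions), and the clean form of $\Delta_{\alb}(I)$ quoted in the paper is a later simplification (see, e.g., the treatment in Minh--Trung, which the paper also uses for Lemma \ref{uni-complex}); your direct \v{C}ech-strand computation proves the simplified statement as given, so in that respect it is self-contained and arguably preferable to tracing the statement through the literature. If you were to write it up in full, the only thing to add is the explicit check that, after putting the elements of $G_{\alb}$ first in the ordering, the discrepancy between the \v{C}ech signs and the simplicial coboundary signs is a constant factor $(-1)^{|G_{\alb}|}$ in each differential, hence harmless, plus the final passage from cohomology to homology over the field $K$, which you already noted.
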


If $\H$ is unimodular, the cover ideal $J(\H)$ is normally torsion-free, i.e. $J(\H)^{(s)} = J(\H)^s$ for all $s\geqslant 1$ by \cite[Theorem 1.1]{HHT}. Combining with \cite[Lemma $1.3$]{MT1} we obtain:

\begin{lem} \label{uni-complex}Let $\H = (\V,\E)$ be a unimodular hypergraph with $\V =\{1,\ldots,n\}$, and $\alb=(\alpha_1,\ldots,\alpha_n)\in\N^n$. Then, for every $s\geqslant 1$ we have
$$
\Delta _{\alb}(J(\H)^s)=\left<\V\setminus E\mid E\in \E \text{ and } \sum_{i\in E} \alpha_i \leqslant s-1\right>.
$$
\end{lem}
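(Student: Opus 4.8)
The plan is to combine the normal torsion-freeness of $J(\H)$ with a description of the degree complex of a symbolic power, which is where the cited result \cite[Lemma~1.3]{MT1} enters. First I would recall from \eqref{intersect} that $J(\H)=\bigcap_{E\in\E}(x_i\mid i\in E)$, so by \cite[Theorem~1.1]{HHT} the hypothesis that $\H$ is unimodular gives $J(\H)^s=J(\H)^{(s)}=\bigcap_{E\in\E}(x_i\mid i\in E)^s$ for every $s\geqslant 1$. Thus it suffices to compute $\Delta_{\alb}\bigl(\bigcap_{E\in\E}(x_i\mid i\in E)^s\bigr)$ for $\alb\in\N^n$, in which case $G_{\alb}=\emptyset$ and $R_F=R[x_i^{-1}\mid i\in F]$.

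Next I would unwind the definition \eqref{degree-complex}. For $F\subseteq\V$ we have $F\in\Delta_{\alb}(J(\H)^s)$ iff $x^{\alb}\notin J(\H)^s R_F$, and since localization commutes with finite intersection, $J(\H)^s R_F=\bigcap_{E\in\E}(x_i\mid i\in E)^s R_F$. Now $(x_i\mid i\in E)^s R_F=R_F$ as soon as $F\cap E\neq\emptyset$ (some $x_i$ with $i\in E$ is inverted), so only the edges $E$ with $E\subseteq\V\setminus F$ impose a condition. For such an edge, $x^{\alb}\in(x_i\mid i\in E)^s R_F$ iff $x^{\alb}\in(x_i\mid i\in E)^s$ (no relevant variable is inverted), and since $(x_i\mid i\in E)^s$ has primary decomposition / is monomial, $x^{\alb}$ lies in it exactly when $\sum_{i\in E}\alpha_i\geqslant s$. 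Hence $F\in\Delta_{\alb}(J(\H)^s)$ iff for every edge $E$ with $E\subseteq\V\setminus F$ we have $\sum_{i\in E}\alpha_i\geqslant s$; equivalently, $F$ contains no set of the form $\V\setminus E$ with $\sum_{i\in E}\alpha_i\leqslant s-1$. This says precisely that $\Delta_{\alb}(J(\H)^s)$ is the simplicial complex whose facets are the maximal sets among $\{\V\setminus E\mid E\in\E,\ \sum_{i\in E}\alpha_i\leqslant s-1\}$, i.e. $\Delta_{\alb}(J(\H)^s)=\langle\V\setminus E\mid E\in\E,\ \sum_{i\in E}\alpha_i\leqslant s-1\rangle$, as claimed. (In the degenerate case where no edge satisfies the inequality, both sides are the full simplex on $\V$, and when $F=\V$ is forced out the complex is the void or irrelevant complex; I would note this boundary case matches the convention in \cite{MT1}.)

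The one step that genuinely needs the external input rather than a direct computation is the passage from $J(\H)^s$ to the intersection $\bigcap_E(x_i\mid i\in E)^s$ \emph{inside} the degree-complex formalism: a priori $\Delta_{\alb}$ of an ideal and of its powers interact with colon/localization in a way that is clean only because the symbolic and ordinary powers coincide, which is exactly \cite[Theorem~1.1]{HHT}, and because \cite[Lemma~1.3]{MT1} packages the degree complex of an intersection of powers of monomial primes into the stated facet description. I expect the main obstacle to be purely bookkeeping: being careful that localizing at $F$ kills precisely the edges meeting $F$, and that "maximal faces" on the right-hand side are correctly identified with the facets $\V\setminus E$ (some of these sets may be non-maximal and should be discarded, but writing them all in the angle-bracket notation is harmless since $\langle\cdot\rangle$ denotes the complex generated by the listed faces). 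No hard estimates are involved; once \eqref{intersect}, \cite[Theorem~1.1]{HHT} and \cite[Lemma~1.3]{MT1} are in hand the identification is immediate.
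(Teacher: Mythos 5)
Your overall route is the same as the paper's: the paper obtains Lemma \ref{uni-complex} exactly by combining \cite[Theorem 1.1]{HHT} (giving $J(\H)^s=J(\H)^{(s)}=\bigcap_{E\in\E}(x_i\mid i\in E)^s$) with \cite[Lemma 1.3]{MT1}, and your direct unwinding of \eqref{degree-complex} is the content of the latter. Your individual computations are also right: localizing at $F$ trivializes precisely the components with $E\cap F\neq\emptyset$, and for $E\cap F=\emptyset$ one has $x^{\alb}\in(x_i\mid i\in E)^sR_F$ if and only if $\sum_{i\in E}\alpha_i\geqslant s$.

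However, the logical assembly of these facts is inverted. By \eqref{degree-complex}, $F\in\Delta_{\alb}(J(\H)^s)$ means $x^{\alb}\notin J(\H)^sR_F$, and non-membership in an intersection means failure in at least one component; hence $F\in\Delta_{\alb}(J(\H)^s)$ if and only if \emph{there exists} $E\in\E$ with $E\cap F=\emptyset$ and $\sum_{i\in E}\alpha_i\leqslant s-1$, i.e. $F\subseteq\V\setminus E$ for some such $E$, which is exactly the generated-complex description. Your stated criterion ``for every edge $E\subseteq\V\setminus F$ one has $\sum_{i\in E}\alpha_i\geqslant s$'' instead characterizes $x^{\alb}\in J(\H)^sR_F$, i.e. $F\notin\Delta_{\alb}(J(\H)^s)$ (and, read literally, that condition is closed under supersets, so it does not even define a simplicial complex); the follow-up ``$F$ contains no set of the form $\V\setminus E$'' also has the inclusion backwards, since $E\subseteq\V\setminus F$ if and only if $F\subseteq\V\setminus E$. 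The same inversion surfaces in your boundary case: if every edge satisfies $\sum_{i\in E}\alpha_i\geqslant s$, then $x^{\alb}\in J(\H)^s\subseteq J(\H)^sR_F$ for all $F$, so $\Delta_{\alb}(J(\H)^s)$ is the void complex, agreeing with the complex generated by the empty family, not the full simplex on $\V$. Once the existential characterization is put in place, the rest of your argument and the final formula are correct and coincide with the paper's derivation.
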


\section{Integer polytopes}

For a vector $\alb = (\alpha_1,\ldots,\alpha_n)\in\R^n$, we set $|\alb| : = \alpha_1+\cdots+\alpha_n$ and for a nonempty bounded closed subset $S$ of $\R^n$ we set 
$$\delta(S) : = \max\{|\alb| \mid \alb \in S\}.$$

Let $\H=(\V,\E)$ be a unimodular hypergraph on the vertex set $\V =\{1,\ldots,n\}$. Assume that 
$$H_{\mi}^p(R/J(\H)^q)_{\btb} \ne 0$$
for some $p\geqslant 0$, $q\geqslant 1$ and $\btb = (\beta_1,\ldots,\beta_n)\in\N^n$.

By Lemma $\ref{TA}$ we have
\begin{equation}\label{N1}
\dim_K \h_{p-1}(\Delta_{\btb}(J(\H)^q);K) =\dim_K H_{\mi}^p(R/J(\H)^q)_{\btb} \ne 0.
\end{equation}
In particular, $\Delta_{\btb}(J(\H)^q)$ is not acyclic.

Suppose that $\E =\{E_1,\ldots,E_m\}$ where $m\geqslant 1$. Then, by Equation $(\ref{complex-cover})$ $$\Delta(J(\H)) = \left<\V\setminus E_1,\ldots, \V\setminus E_m\right>.$$
Since $\Delta_{\btb}(J(\H)^q)$ is not acyclic, by Lemma $\ref{uni-complex}$ we may assume that 
$$\Delta_{\btb}(J(\H)^q) =\left<\V\setminus E_1,\ldots, \V\setminus E_k\right>$$
where $1\leqslant k\leqslant m$. 

For each integer $t\geqslant 1$, let $\mathcal P_t$ be the set of solutions in $\R^n$ of the following system:
\begin{equation}\label{EQ-basics}
\begin{cases}
\sum_{i\in E_j} x_i  \leqslant t-1 & \text{ for } j = 1,\ldots,k,\\
\sum_{i\in E_j} x_i  \geqslant  t & \text{ for } j = k+1,\ldots,m,\\
x_1\geqslant 0,\ldots,x_n\geqslant 0.
\end{cases}
\end{equation}
Then, $\btb \in \mathcal P_q$. Moreover, by Lemma $\ref{uni-complex}$ one has
$$\Delta_{\alb}(J(\H)^t) = \left<\V\setminus E_1,\ldots, \V\setminus E_k\right> = \Delta_{\btb}(J(\H)^q) \ \text{ whenever } \alb\in \P_t \cap \N^n .$$

In order to investigate the set $\P_t$ we consider $\C_t$ to be the set of solutions in $\R^n$ of the following system:
\begin{equation}\label{EQ-polytope}
\begin{cases}
\sum_{i\in E_j} x_i  \leqslant t & \text{ for } j = 1,\ldots,k,\\
\sum_{i\in E_j} x_i  \geqslant  t & \text{ for } j = k+1,\ldots,m,\\
x_1\geqslant 0,\ldots,x_n\geqslant 0.
\end{cases}
\end{equation}

Since $\H$ is unimodular, we have both $\P_t$ and $\C_t$ are integer convex polyhedra by \cite[Theorem $19.1$]{S}, i.e. all their vertices have integer coordinates.

\begin{lem} \label{R1} \cite[Lemma $2.1$]{HT1} $\C_1$ is a polytope with $\dim \C_1 = n$.
\end{lem}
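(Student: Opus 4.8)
The plan is to establish, in turn, that $\C_1$ is a polyhedron (immediate: it is the solution set in $\R^n$ of the finitely many linear inequalities $(\ref{EQ-polytope})$ with $t=1$), that it is bounded — hence a polytope — and that it contains a point of $\R^n$ satisfying all these inequalities strictly, so that $\dim\C_1=n$.

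\emph{Boundedness.} Every point of $\C_1$ has nonnegative coordinates, so it suffices to bound each $x_i$ from above on $\C_1$; the only inequalities in $(\ref{EQ-polytope})$ able to do this are those of the first kind, $\sum_{i\in E_j}x_i\leqslant 1$ with $j\leqslant k$. Hence boundedness follows once we show $\bigcup_{j=1}^k E_j=\V$: then every $i$ lies in some $E_j$ with $j\leqslant k$, whence $0\leqslant x_i\leqslant\sum_{l\in E_j}x_l\leqslant 1$ on $\C_1$. To prove $\bigcup_{j=1}^k E_j=\V$, suppose some $v\in\V$ lies in none of $E_1,\ldots,E_k$. Then $v\in\V\setminus E_j$ for all $j\leqslant k$, so $v$ belongs to every facet of $\Delta_{\btb}(J(\H)^q)=\langle\V\setminus E_1,\ldots,\V\setminus E_k\rangle$; consequently, for any face $\sigma$ contained in a facet $F$ we also have $\sigma\cup\{v\}\subseteq F$, so $\Delta_{\btb}(J(\H)^q)$ is a cone with apex $v$, hence contractible and in particular acyclic. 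This contradicts the non-acyclicity of $\Delta_{\btb}(J(\H)^q)$ recorded just after $(\ref{N1})$, and so $\bigcup_{j=1}^k E_j=\V$.

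\emph{Full-dimensionality.} From the construction $\btb\in\P_q$, so $\sum_{i\in E_j}\beta_i\leqslant q-1$ for $j\leqslant k$ and $\sum_{i\in E_j}\beta_i\geqslant q$ for $j>k$. Substituting $x_i\mapsto x_i/q$ in $(\ref{EQ-polytope})$ shows $\C_t=t\,\C_1$ for every $t\geqslant 1$, so $y:=\btb/q\in\C_1$, and moreover $\sum_{i\in E_j}y_i\leqslant(q-1)/q<1$ strictly for each $j\leqslant k$. Writing $\mathbf 1=(1,\ldots,1)$, for all sufficiently small $\varepsilon>0$ the perturbed point $y+\varepsilon\mathbf 1$ still satisfies $\sum_{i\in E_j}(y_i+\varepsilon)<1$ for $j\leqslant k$, while $\sum_{i\in E_j}(y_i+\varepsilon)=\bigl(\sum_{i\in E_j}y_i\bigr)+|E_j|\varepsilon>1$ for $j>k$ (every edge is nonempty) and $y_i+\varepsilon>0$ for all $i$. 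Thus $y+\varepsilon\mathbf 1$ is an interior point of $\C_1$, so $\dim\C_1=n$.

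The only genuinely nontrivial point is the identity $\bigcup_{j=1}^k E_j=\V$; the rest is bookkeeping with the inequalities $(\ref{EQ-polytope})$. This identity is precisely where the standing hypothesis $\widetilde{H}_{p-1}(\Delta_{\btb}(J(\H)^q);K)\neq 0$ enters, through the fact that a vertex missing from all of $E_1,\ldots,E_k$ would be a cone point of the degree complex and would force it to be acyclic. I note that this reasoning is unaffected by the degenerate case in which some $\V\setminus E_j$ is empty: assuming $v$ lies in none of $E_1,\ldots,E_k$ already forces every such $\V\setminus E_j$ to contain $v$.
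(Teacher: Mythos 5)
Your proof is correct, and it is worth noting that the paper itself supplies no argument for Lemma \ref{R1}: the statement is simply quoted from \cite[Lemma 2.1]{HT1}, so your write-up is a genuinely self-contained substitute rather than a rephrasing of anything in the text. The interesting comparison is with Remark \ref{PBP}: the paper takes boundedness of $\C_t$ for granted (via the citation) and \emph{deduces} from it that every vertex $i$ lies in some $E_j$ with $j\leqslant k$, whereas you run the implication the other way, first proving the covering property $\bigcup_{j=1}^{k}E_j=\V$ from the standing non-acyclicity of $\Delta_{\btb}(J(\H)^q)$ (a vertex omitted by all $E_1,\ldots,E_k$ would be a cone point, forcing all reduced homology to vanish) and then reading off boundedness from $0\leqslant x_i\leqslant\sum_{l\in E_j}x_l\leqslant 1$. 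That is the right logical order for an actual proof, and it yields the content of Remark \ref{PBP} as a byproduct. Your full-dimensionality step also correctly avoids a potential circularity: it uses only $\btb\in\P_q$, which is available from the hypothesis $H_{\mi}^p(R/J(\H)^q)_{\btb}\neq 0$ stated before the lemma, and not Lemma \ref{R2}, whose proof in the paper relies on Lemma \ref{R1}; the perturbed point $y+\varepsilon\mathbf{1}$ satisfies every inequality of $(\ref{EQ-polytope})$ with $t=1$ strictly (the edges being nonempty is what makes the constraints indexed by $j>k$ strict), and a point satisfying all defining inequalities strictly is interior, so $\dim\C_1=n$ follows.
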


\begin{rem}\label{PBP} Since $C_t = tC_1$ is also a polytope, it is bounded. Thus, for every $i =1,\ldots,n$, from the system $(\ref{EQ-polytope})$ we imply that $i\in E_j$ for some $1\leqslant j\leqslant k$.
\end{rem}

Observe that $\P_t \subseteq \C_t$, so $\P_t$ is a polytope as well. 

\begin{lem}\label{R2} $\P_n\ne\emptyset$.
\end{lem}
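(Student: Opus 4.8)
The goal is to produce an explicit point — or at least to prove the existence of one — in the polytope $\mathcal{P}_n$ defined by system $(\ref{EQ-basics})$ with $t = n$, namely a nonnegative vector $\alb$ satisfying $\sum_{i \in E_j} \alpha_i \le n-1$ for $j \le k$ and $\sum_{i \in E_j} \alpha_i \ge n$ for $j > k$. The plan is to first transfer the problem from $\mathcal{P}_n$ to $\mathcal{C}_n$, which is nonempty by construction (we already know $\btb \in \mathcal{P}_q \subseteq \mathcal{C}_q$, so $\mathcal{C}_1 \ne \emptyset$ by Lemma \ref{R1}, and $\mathcal{C}_n = n\,\mathcal{C}_1$). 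The key observation is that $\mathcal{C}_n$ is an integer polytope (by \cite[Theorem 19.1]{S}), so it has a vertex $\gmb$ with integer coordinates. I would then argue that the integrality and the bound on the rank $r \le n$ force such a lattice point, after a small perturbation, into $\mathcal{P}_n$.

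Concretely, the main step is: take any vertex $\gmb \in \mathcal{C}_1 \cap \N^n$ and set $\alb := n\gmb$ — no, more carefully, one should not simply rescale, since rescaling sends the ``$\le n$'' faces to equalities, not strict inequalities. Instead I would take $\gmb$ a vertex of $\mathcal{C}_1 \cap \N^n$, so that $\sum_{i \in E_j} \gamma_i \le 1$ for $j \le k$ and $\sum_{i\in E_j}\gamma_i \ge 1$ for $j > k$, and then consider $\alb := (n-1)\gmb + \ttb$ for a suitable correction vector $\ttb \in \N^n$ supported so as to push each ``lower'' constraint $\sum_{i\in E_j}\alpha_i$ up to at least $n$ while keeping each ``upper'' constraint at $\sum_{i \in E_j}\alpha_i \le (n-1)\cdot 1 + (\text{something} \le 0)$. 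The cleanest route: observe that if $\gmb$ is a $0/1$ vertex (which is plausible but not guaranteed), then $(n-1)\gmb$ satisfies $\sum_{i\in E_j}\alpha_i \le (n-1)|E_j \cap \supp\gmb|/|E_j|$-type bounds — this is getting complicated, so the honest version of the plan is: use that $\mathcal{C}_1 \cap \N^n \ne \emptyset$ to fix an integer point $\gmb$, then among the points $\{\gmb, 2\gmb, \ldots\}$ together with adding multiples of unit vectors $\e_i$ for $i$ in the ``lower'' edges, find one landing in $\mathcal{P}_n$; the rank bound $r \le n$ enters because each $E_j$ with $j \le k$ has $|E_j| \le r \le n$, so adding $1$ to a single coordinate raises each upper constraint by at most... and we have $n-1$ units of slack available per the shift from $\mathcal{C}_n$ to $\mathcal{P}_n$.

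The step I expect to be the genuine obstacle is reconciling the two directions simultaneously: increasing coordinates to satisfy the lower bounds $\sum_{i \in E_j}\alpha_i \ge n$ for $j > k$ risks violating the upper bounds $\sum_{i \in E_j}\alpha_i \le n-1$ for $j \le k$, and it is exactly here that unimodularity must do real work — presumably via an integer-programming feasibility argument (a unimodular system whose LP-relaxation is feasible has an integer solution) rather than a hands-on construction. So I would phrase the proof as: the real polytope $\mathcal{P}_n$ is defined by a system with totally unimodular constraint matrix; by \cite[Theorem 19.1]{S} it suffices to show $\mathcal{P}_n \ne \emptyset$ over $\R$; and for that, exhibit the real point $\frac{n-1/2}{q}\,\btb$ (or a similar rescaling of the known point $\btb \in \mathcal{P}_q$), checking that $\sum_{i\in E_j}\tfrac{n-1/2}{q}\beta_i \le \tfrac{n-1/2}{q}(q-1) < n-1$?? — the scaling constants need to be chosen so both families of inequalities survive, using $\sum_{i\in E_j}\beta_i \le q-1$ for the upper family and $\ge q$ for the lower family. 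Getting those two scalars to coexist, using $q \ge 1$ and the rank/vertex-count bounds, is the crux; everything else is bookkeeping.
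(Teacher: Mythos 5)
Your proposal does not close the lemma: every route you sketch stops exactly at the step you yourself identify as the crux, and the one concrete construction you offer fails in general. Rescaling the known point $\btb\in\P_q$ cannot work: all you know about $\btb$ is $\sum_{i\in E_j}\beta_i\leqslant q-1$ for $j\leqslant k$ and $\sum_{i\in E_j}\beta_i\geqslant q$ for $j>k$, and in the worst case (both tight) a scalar $\lambda$ with $\lambda\btb\in\P_n$ must satisfy $\lambda q\geqslant n$ and $\lambda(q-1)\leqslant n-1$, i.e.\ $n/q\leqslant\lambda\leqslant (n-1)/(q-1)$, an interval that is empty whenever $q>n$ (e.g.\ with $E_1=\{1\}$, $E_2=\{2\}$, $k=1$, $\btb=(q-1,q)$, no positive multiple of $\btb$ lies in $\P_2$ once $q\geqslant 3$). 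Since $q$ is not bounded in terms of $n$, this route collapses. Your other two sketches (scaling a vertex $\gmb$ of $\C_1$, or $(n-1)\gmb+\ttb$ with a correction $\ttb$) are abandoned before resolving precisely the conflict you name: raising the sums on the edges $E_{k+1},\ldots,E_m$ up to $n$ while keeping the sums on $E_1,\ldots,E_k$ at most $n-1$, which is nontrivial because a coordinate can occur in both types of edges. Also note that invoking total unimodularity to pass from real to integer feasibility is beside the point here: $\P_n$ is by definition the set of \emph{real} solutions, so the lemma is exactly the real-feasibility statement you propose to reduce to.

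The idea you are missing is the paper's: after disposing of the trivial case $k=m$ (the zero vector lies in $\P_n$), do not scale a single known point, but build one inside $\C_1$ that satisfies all the upper constraints \emph{strictly}. Take the facet $F$ of $\C_1$ cut out by the supporting hyperplane $\sum_{i\in E_m}x_i=1$; since $\dim\C_1=n$ (Lemma \ref{R1}) one can choose $n$ affinely independent vertices $\alb^1,\ldots,\alb^n$ of $\C_1$ on $F$, and their barycenter $\alb$ is a relative interior point of $F$, hence lies on no other facet, so $\sum_{i\in E_j}\alpha_i<1$ for $j=1,\ldots,k$ while $\sum_{i\in E_j}\alpha_i\geqslant 1$ for $j>k$. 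Unimodularity enters through integrality of the vertices: $n\alb=\alb^1+\cdots+\alb^n\in\N^n$, so the strict inequalities $\sum_{i\in E_j}(n\alpha_i)<n$ upgrade to $\leqslant n-1$ by integrality, while the lower constraints scale to $\geqslant n$; hence $n\alb\in\P_n$. This simultaneous handling of the two families--strictness from a relative interior point, then integrality converting ``$<n$'' into ``$\leqslant n-1$''--is exactly the mechanism your plan lacks.
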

\begin{proof} From the system $(\ref{EQ-basics})$, $\P_n$ is the set of solutions of the following system:
\begin{equation}\label{EQ-Pm}
\begin{cases}
\sum_{i\in E_j} x_i  \leqslant n-1 & \text{ for } j = 1,\ldots,k,\\
\sum_{i\in E_j} x_i  \geqslant  n & \text{ for } j = k+1,\ldots,m,\\
x_1\geqslant 0,\ldots,x_n\geqslant 0.
\end{cases}
\end{equation}

If $k=m$, then the zero vector of $\R^n$ is in $\P_n$, and then $\P_n\ne\emptyset$. 

Assume that $k < m$. From the system $(\ref{EQ-polytope})$ we conclude that $\sum_{i\in E_m} x_i  = 1$ is a supporting hyperplane of $\C_1$. Let $F$ be the facet of $\C_1$ determined by this hyperplane. Now take $n$ vertices of $\C_1$ lying in $F$, say $\alb^1, \ldots,\alb^n$, such that they are affinely independent. Let $\alb := (\alb^1+\cdots+\alb^n)/n \in \C_1$. Then, $\alb$ is a relative interior point of $F$, so that it does not belong to any another facet of $\C_1$. Thus, $\alb$ is a solution of the following system:
\begin{equation}
\begin{cases}
\sum_{i\in E_j} x_i  < 1 & \text{ for } j = 1,\ldots,k,\\
\sum_{i\in E_j} x_i  \geqslant  1 & \text{ for } j = k+1,\ldots,m,\\
x_1\geqslant 0,\ldots,x_n\geqslant 0.
\end{cases}
\end{equation}
Therefore, $n\alb$ is a solution of the following system
\begin{equation}
\begin{cases}
\sum_{i\in E_j} x_i  < n & \text{ for } j = 1,\ldots,k,\\
\sum_{i\in E_j} x_i  \geqslant  n & \text{ for } j = k+1,\ldots,m,\\
x_1\geqslant 0,\ldots,x_n\geqslant 0.
\end{cases}
\end{equation}
Together with the fact that $n\alb\in \N^n$, it yields $n\alb \in \P_n$, and thus $\P_n\ne\emptyset$.
\end{proof}

Since $\C_1$ is a polytope, there is a vertex $\gmb=(\gamma_1,\ldots,\gamma_n)$ of $\C_1$ such that 
$$\delta(\C_1) = |\gmb| = \gamma_1+\cdots+\gamma_n.$$
Let $d:= |\gmb|$. Note that $t\gmb$ is also a vertex of $\C_t$ and $\delta(\C_t) = dt$. Since $\P_t \subseteq \C_t$, we have $\delta(P_t) \leqslant dt$, so we can write
$$\delta(P_t) = dt-e_t \text{ for some integer } e_t \geqslant 0.$$

\begin{rem}\label{POSITIVE} Since $C_1$ is a polytope of dimension $n$, we have $d\geqslant 1$.
\end{rem}

\begin{lem}\label{R3} If $\P_t\ne \emptyset$, then $\P_{t+1}\ne \emptyset$ and $e_t\geqslant e_{t+1}$.
\end{lem}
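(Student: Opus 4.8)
The plan is to build an explicit point of $\P_{t+1}$ by translating a maximizer of $|\cdot|$ on $\P_t$ by the distinguished vertex $\gmb$ of $\C_1$. Since $\P_t$ is a nonempty bounded closed polytope, the linear functional $|\cdot|$ attains its maximum on $\P_t$, so I would fix $\alb=(\alpha_1,\ldots,\alpha_n)\in\P_t$ with $|\alb|=\delta(\P_t)=dt-e_t$. Recall that $\gmb=(\gamma_1,\ldots,\gamma_n)$ is a vertex of $\C_1$ with $|\gmb|=d$, and in particular $\gmb$ satisfies the system $(\ref{EQ-polytope})$ with $t=1$. The candidate point is $\alb+\gmb$, and the claim to verify is that $\alb+\gmb\in\P_{t+1}$.

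The verification is just adding the two systems. For $j=1,\ldots,k$, the bound $\sum_{i\in E_j}\alpha_i\leqslant t-1$ (from $\alb\in\P_t$) together with $\sum_{i\in E_j}\gamma_i\leqslant 1$ (from $\gmb\in\C_1$) gives $\sum_{i\in E_j}(\alpha_i+\gamma_i)\leqslant t=(t+1)-1$. For $j=k+1,\ldots,m$, the bounds $\sum_{i\in E_j}\alpha_i\geqslant t$ and $\sum_{i\in E_j}\gamma_i\geqslant 1$ give $\sum_{i\in E_j}(\alpha_i+\gamma_i)\geqslant t+1$. Nonnegativity is clearly preserved. Comparing with the system $(\ref{EQ-basics})$ defining $\P_{t+1}$, this shows $\alb+\gmb\in\P_{t+1}$, hence $\P_{t+1}\neq\emptyset$. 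Then $|\alb+\gmb|=|\alb|+|\gmb|=(dt-e_t)+d=d(t+1)-e_t$, so $\delta(\P_{t+1})\geqslant d(t+1)-e_t$; since $\delta(\P_{t+1})=d(t+1)-e_{t+1}$ by definition of $e_{t+1}$, we get $e_{t+1}\leqslant e_t$.

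I do not expect a genuine obstacle here. The only points deserving a word of care are that $\delta(\P_t)$ is actually realized by some $\alb\in\P_t$ (immediate, as $\P_t$ is nonempty, bounded and closed) and that $\gmb$ satisfies the lower bounds $\sum_{i\in E_j}\gamma_i\geqslant 1$ for $j>k$ (immediate from $\gmb\in\C_1$). The structural input that makes the argument work is the homogeneity $\C_t=t\C_1$ together with $\gmb\in\C_1$: adding a $\C_1$-point to a $\P_t$-point shifts every constraint by exactly the amount needed to land in the "$t+1$" system.
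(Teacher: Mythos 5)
Your proposal is correct and coincides with the paper's own argument: both take a maximizer $\alb\in\P_t$ of $|\cdot|$, add the distinguished vertex $\gmb$ of $\C_1$, check the shifted inequalities to conclude $\alb+\gmb\in\P_{t+1}$, and compare $|\alb+\gmb|=d(t+1)-e_t$ with $\delta(\P_{t+1})=d(t+1)-e_{t+1}$ to get $e_{t+1}\leqslant e_t$. No gaps.
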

\begin{proof} Let $\alb = (\alpha_1,\ldots,\alpha_n)\in \P_t$ such that $\delta(\P_t) = |\alb|$. Since $\alb$ is a solution of the system $(\ref{EQ-basics})$, and $\gmb$ is a solution of the system $(\ref{EQ-polytope})$ with $1$ in place of $t$, we deduce that
$$
\begin{cases}
\sum_{i\in E_j} (\alpha_i+\gamma_i)  \leqslant t & \text{ for } j = 1,\ldots,k,\\
\sum_{i\in E_j} (\alpha_i+\gamma_i)  \geqslant  t+1 & \text{ for } j = k+1,\ldots,m.
\end{cases}
$$
In other words, $\alb+\gmb\in \P_{t+1}$. Therefore, $\P_{t+1}\ne\emptyset$ and $\delta(P_{t+1}) \geqslant |\alpha|+|\gamma|$. Since $\delta(\P_{t+1}) = d(t+1)-e_{t+1}$ and $|\alb|+|\gmb| = d(t+1)-e_t$, we have $e_t\geqslant e_{t+1}$.
\end{proof}

For a real number $x$ we denote by $\lceil x \rceil$ the least integer $\geqslant x$. The following lemma plays a key role in the paper. It says that $\delta(\P_t)$ is a linear function in $t$ for $t$ big enough.

\begin{lem}\label{R4} There are non-negative integers $d$ and $e$ with $e \leqslant n^2$ such that
$$\delta(\P_t) = dt-e \text{  for } t\geqslant r\left\lceil \frac{n}{2}\right\rceil+1$$
where $r = \rank(\H)$.
\end{lem}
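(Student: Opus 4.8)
The plan is to exploit that $(e_t)$ is a non-increasing sequence of non-negative integers --- hence eventually constant --- and to pin down both its limit $e$ and an explicit index from which it is constant. \emph{Step 1 (easy reductions).} By Lemma~\ref{R2} we have $\P_t\neq\emptyset$ for $t\geqslant n$, so by Lemma~\ref{R3} the sequence $(e_t)_{t\geqslant n}$ is non-increasing; moreover $\delta(\P_t)=dt-e_t$ is a non-negative integer, since $\P_t$ is a nonempty integral polytope contained in $\R^n_{\geqslant 0}$. Hence $(e_t)$ stabilizes; set $e:=\lim_{t\to\infty}e_t$. To bound $d$, note that by Remark~\ref{PBP} each vertex $i$ lies in some $E_j$ with $j\leqslant k$, so $0\leqslant x_i\leqslant\sum_{i'\in E_j}x_{i'}\leqslant 1$ on $\C_1$; since $\C_1$ is an integral polytope, its vertices lie in $\{0,1\}^n$, whence $d=|\gmb|\leqslant n$. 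Therefore
$$e\ \leqslant\ e_n\ =\ dn-\delta(\P_n)\ \leqslant\ dn\ \leqslant\ n^2,$$
which already gives the asserted bound on $e$.

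\emph{Step 2 (reducing the stabilization index to a Minkowski inclusion).} It remains to show $e_t=e$ for all $t\geqslant r\lceil n/2\rceil+1$. By Lemma~\ref{R3} one always has $\delta(\P_{t+1})\geqslant\delta(\P_t)+d$, so it suffices to prove the reverse inequality for these $t$: then $e_{t+1}=e_t$ from $t=r\lceil n/2\rceil+1$ on, and the common value must be $e$. Since the functional $|\cdot|$ is linear, $\delta(A+B)=\delta(A)+\delta(B)$ for Minkowski sums, and $\delta(\C_1)=d$; hence $\delta(\P_{t+1})\leqslant\delta(\P_t)+d$ would follow from the inclusion
$$\P_{t+1}\ \subseteq\ \P_t+\C_1\qquad\text{for all }t\geqslant r\left\lceil\tfrac{n}{2}\right\rceil+1$$
(in fact it is enough that some point of $\P_{t+1}$ of maximal $|\cdot|$ decompose as a point of $\P_t$ plus a point of $\C_1$). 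Unwinding: given $\alb\in\P_{t+1}$ we must find $\gmb'$ with $0\leqslant\gamma_i'\leqslant\alpha_i$ for all $i$, with $\gmb'\in\C_1$, and with $\alb-\gmb'\in\P_t$; equivalently, $\gmb'$ must solve the linear system
$$0\leqslant\gamma_i'\leqslant\alpha_i\ (1\leqslant i\leqslant n),\qquad L_j\leqslant\textstyle\sum_{i\in E_j}\gamma_i'\leqslant 1\ (j\leqslant k),\qquad 1\leqslant\textstyle\sum_{i\in E_j}\gamma_i'\leqslant U_j\ (j>k),$$
where $L_j:=\max\{0,\,\sum_{i\in E_j}\alpha_i-t+1\}$ and $U_j:=\sum_{i\in E_j}\alpha_i-t$; the one-edge bounds are consistent precisely because $\alb\in\P_{t+1}$. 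The coefficient matrix of this system is totally unimodular, being assembled from the incidence matrix of $\H$ together with identity blocks, so it is solvable as soon as it is solvable over $\R$.

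\emph{Step 3 (the main obstacle).} The heart of the proof is thus to verify real solvability of that system for $t\geqslant r\lceil n/2\rceil+1$. The obvious candidate $\gmb':=\tfrac{1}{t+1}\alb$ lies in $\C_1$ and respects the box constraints, but it fails the lower bound $L_j$ on the edges $E_j$, $j\leqslant k$, that are tight for $\alb$ (those with $\sum_{i\in E_j}\alpha_i=t$), so one must shift mass onto the tight $\leqslant$-edges without overshooting the upper bounds elsewhere. This is where the hypothesis $t\geqslant r\lceil n/2\rceil+1$ enters: every edge has at most $r$ vertices and each constrained edge-sum of $\alb$ is at least $t$, so inside each such edge some coordinate of $\alb$ is large enough to supply a unit of transfer, while the factor $\lceil n/2\rceil$ controls the length of the alternating chain of adjustments one propagates through $\H$ --- unimodularity ruling out the hypergraph analogue of an odd cycle, so that such a ``circulation'' exists. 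Making this redistribution precise --- equivalently, checking a Hoffman-type feasibility criterion for the displayed system --- is the genuinely technical point; granting it, $\P_{t+1}\subseteq\P_t+\C_1$ for $t\geqslant r\lceil n/2\rceil+1$, and together with Step~1 this gives $\delta(\P_t)=dt-e$ with $0\leqslant e\leqslant n^2$ for all such $t$, as required.
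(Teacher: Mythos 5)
Your Step 1 is correct, and in fact it obtains the bound $e\leqslant n^2$ a little more cleanly than the paper does (the paper gets $e=c_1+\cdots+c_n$ with $|c_i|\leqslant |S_1|\leqslant n$ from a Cramer's-rule analysis). The reduction in Step 2 is also legitimate: since $e_t$ is non-increasing, it suffices to prove $\delta(\P_{t+1})\leqslant\delta(\P_t)+d$ for all $t\geqslant r\left\lceil\frac{n}{2}\right\rceil+1$, and for that it is enough that some maximizer of $|\cdot|$ on $\P_{t+1}$ decompose as a point of $\P_t$ plus a point of $\C_1$. But Step 3, which carries the entire quantitative content of the lemma --- why the specific threshold $r\left\lceil\frac{n}{2}\right\rceil+1$ suffices --- is not proved: you recast it as a Hoffman-type feasibility question and then write ``granting it''. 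The heuristics offered (some coordinate ``large enough to supply a unit of transfer'', an ``alternating chain'' of length controlled by $\left\lceil\frac{n}{2}\right\rceil$, unimodularity ``ruling out odd cycles'') are not an argument, and nothing in the proposal indicates how $\left\lceil\frac{n}{2}\right\rceil$ would actually enter such a circulation/redistribution scheme. The remark about total unimodularity of the feasibility system is also beside the point: real feasibility is all that is needed (because $\delta$ is a maximum over real points), and real feasibility at the stated threshold is exactly what is missing. So the proposal, as written, only yields $\delta(\P_t)=dt-e$ for $t\gg 0$ with no effective bound, which misses the point of the lemma.

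For comparison, the paper does not prove any Minkowski-type inclusion. It fixes one very large $s\geqslant\max\{n^2+e+1,t_0\}$, takes a vertex $\alb$ of $\P_s$ attaining $\delta(\P_s)=ds-e$, and uses the defining equations of that vertex, Cramer's rule and total unimodularity to write $\alpha_i=d_is-c_i$ with $d_i\in\{0,1\}$ and, crucially, $|c_i|\leqslant\left\lceil\frac{n}{2}\right\rceil$; this last bound comes from the determinant identity $\det(D_i)=\det(C_i)+\det(E_i)$ together with a case analysis on whether $|S_1|$ or $|S_2|$ is at most $\left\lceil\frac{n}{2}\right\rceil$, and it is precisely where $\left\lceil\frac{n}{2}\right\rceil$ originates. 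One then checks directly, edge by edge, that $\alb(t)=(d_1t-c_1,\ldots,d_nt-c_n)$ satisfies the system defining $\P_t$ for every $t\geqslant r\left\lceil\frac{n}{2}\right\rceil+1$, giving the lower bound $\delta(\P_t)\geqslant dt-e$, while the upper bound is the monotonicity of $e_t$ from Lemma \ref{R3} (your own observation). To complete your proof you would have to supply a genuine feasibility argument at the stated threshold, and the only visible way to do so is essentially this vertex analysis or some new argument you have not given.
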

\begin{proof}

By Lemmas $\ref{R2}$ and $\ref{R3}$ we have $e_n\geqslant e_{n+1}\geqslant\cdots \geqslant 0$. It follows that there is $t_0 \geqslant n$ such that $e_t = e_{t_0}$ for $t\geqslant t_0$. Let $e := e_{t_0}$. Then,
$$\delta(\P_t) = dt-e, \text{ for all } t\geqslant t_0.$$
By Lemma $\ref{R3}$ we have
\begin{equation}\label{UPBa}
\delta(\P_t) \leqslant dt-e \text{ whenever } \P_t\ne\emptyset.
\end{equation}

Let $s$ be an integer such that $s\geqslant \max\{n^2+e+1,t_0\}$. Then, $\delta(\P_s) = ds-e$. Because $\P_s$ is a polytope, we have $\delta(\P_s) = |\alb|$ for some  vertex $\alb$ of $\P_s$. By  the system $(\ref{EQ-basics})$, the polytope $\P_s$ is defined by
\begin{equation}\label{LE}
\begin{cases}
\sum_{i\in E_j} x_i  \leqslant s-1 & \text{ for } j = 1,\ldots,k,\\
\sum_{i\in E_j} x_i  \geqslant  s & \text{ for } j = k+1,\ldots,m,\\
x_1\geqslant 0,\ldots,x_n\geqslant 0.
\end{cases}
\end{equation}
By \cite[Formula $23$ in Page $104$]{S} we can represent $\alb$ as the unique solution of  a system of linear equations of the form
\begin{equation}\label{EQ-alpha}
\begin{cases}
\sum_{i\in E_j} x_i  = s-1 & \text{ for } j \in S_1,\\
\sum_{i\in E_j} x_i  =  s & \text{ for } j \in S_2,\\
x_j=0,  & \text{ for } j \in S_3,
\end{cases}
\end{equation}
where $S_1 \subseteq [k]$, $S_2\subseteq \{k+1,\ldots,m\}$ and $S_3\subseteq [n]$ such that $|S_1|+|S_2|+|S_3| = n$. 

Let $A$ be the matrix of the system $(\ref{EQ-alpha})$ and write $\alb = (\alpha_1,\ldots,\alpha_n)$.  For each $i$, let $A_i$ be the matrix obtained from $A$ by replacing the $i$-th column by: the first $|S_1|$ entries are $s-1$, the next $|S_2|$ consecutive entries are $s$, and the last $|S_3|$ entries are zeroes. Then, by Cramer's rule we have
\begin{equation*}\label{EQ4}\alpha_i = \frac{\det(A_i)}{\det(A)}\  \text{ for } i=1,\ldots, n.
\end{equation*}

Since $\H$ is unimodular, $A$ is totally unimodular, and so $\det(A) = \pm 1$. Without loss of generality, we may assume that $\det(A) = 1$. Thus,
$\alpha_i = \det(A_i), \text{ for } i=1,\ldots,n$.

Write the matrix $A_i$ in the form
 \[A_i= \begin{bmatrix} *&*&\cdots & s-1 & \cdots & *\\
\vdots &\vdots &\cdots & \vdots & \cdots & \vdots\\
*&*&\cdots & s-1 & \cdots & *\\
*&*&\cdots & s & \cdots & *\\
\vdots &\vdots &\cdots & \vdots & \cdots & \vdots\\
*&*&\cdots & s & \cdots & *\\
*&*&\cdots & 0 & \cdots & *\\
\vdots &\vdots &\cdots & \vdots & \cdots & \vdots\\
*&*&\cdots & 0 & \cdots & *
\end{bmatrix},\]
and let
\begin{equation} \label{EQ-DC}
D_i= \begin{bmatrix} *&*&\cdots & 1 & \cdots & *\\
\vdots &\vdots &\cdots & \vdots & \cdots & \vdots\\
*&*&\cdots & 1 & \cdots & *\\
*&*&\cdots & 1 & \cdots & *\\
\vdots &\vdots &\cdots & \vdots & \cdots & \vdots\\
*&*&\cdots & 1 & \cdots & *\\
*&*&\cdots & 0 & \cdots & *\\
\vdots &\vdots &\cdots & \vdots & \cdots & \vdots\\
*&*&\cdots & 0 & \cdots & *
\end{bmatrix} \text{ and }
C_i=\begin{bmatrix} *&*&\cdots & 1 & \cdots & *\\
\vdots &\vdots &\cdots & \vdots & \cdots & \vdots\\
*&*&\cdots & 1 & \cdots & *\\
*&*&\cdots & 0 & \cdots & *\\
\vdots &\vdots &\cdots & \vdots & \cdots & \vdots\\
*&*&\cdots & 0 & \cdots & *\\
*&*&\cdots & 0 & \cdots & *\\
\vdots &\vdots &\cdots & \vdots & \cdots & \vdots\\
*&*&\cdots & 0 & \cdots & *
\end{bmatrix}.
\end{equation}
Then, $\det(A_i) = \det(D_i)s-\det(C_i)$. Let $d_i := \det(D_i)$ and $c_i := \det(C_i)$ so that
$$\alpha_i = d_is-c_i, \text{ for } i = 1,\ldots,n.$$

When deleting the $i$-th column from $C_i$ we get a totally unimodular matrix. Therefore, by expanding the determinant of $C_i$ along the $i$-th column we obtain
\begin{equation}\label{IQ-c}
|c_i| \leqslant |S_1| \text{ , for } i =1,\ldots, n.
\end{equation}

Let $d' := d_1+\cdots+d_n$ and $c' := c_1+\cdots+c_n$. Then, we have $|\alb| = d's-c'$ and $|c'| \leqslant n|S_1| \leqslant n^2$. Since $\delta(\P_s) = ds-e$, we have $ds-e = d's-c'$. Thus, $(d-d')s = e-c'$. Note that $|e-c'| \leqslant e+n^2 < s$. It follows that $d-d' = 0$ and $e-c'=0$, i.e. $d=d'$ and $e=c'$. In particular, $e\leqslant n^2$.

As $\alb=(d_1s-c_1,\ldots,d_ns-c_n) \in \N^n$ and $|c_i| \leqslant |S_1| \leqslant n<s$,  for every $i$ we have
\begin{equation}\label{CL1}
d_i \geqslant 0 \text{, and } d_i \geqslant 1 \text{ if } c_i > 0.
\end{equation}

Since $\alb\in\P_s$, from the system $(\ref{LE})$ we have
$$\left(\sum_{i\in E_j} d_i\right)s-\sum_{i\in E_j} c_i  \leqslant s-1 \text{ for } j=1,\ldots,k.$$
Together with  $\sum_{i\in E_j}|c_i|\leqslant|E_j|n \leqslant rn$ for every $j$ and $s\geqslant rn+1$, it follows that
\begin{equation}\label{CL2}
\sum_{i\in E_j} d_i=
\begin{cases}
0 & \text{ if } \sum_{i\in E_j}c_i  \leqslant 0, \\
1 & \text{otherwise.}
\end{cases}
\end{equation} 
for  $j=1,\ldots,k$. 

Similarly, we also have 
\begin{equation}\label{CL3}
\sum_{i\in E_j} d_i\geqslant 1 \text{, and } \sum_{i\in E_j} d_i\geqslant 2 \text{ if } \sum_{i\in E_j} c_i > 0
\end{equation} 
for  $j=k+1,\ldots,m$.

We first claim that
\begin{equation}\label{D1}
d_i \leqslant 1 \text{ for every } i=1,\ldots, n.
\end{equation}
Indeed, by Remark $\ref{PBP}$, we have $i\in E_j$ for some $j\in [k]$ , and hence  by $(\ref{CL2})$ we have
\begin{equation}\label{D1}
d_i \leqslant \sum_{u\in E_j}d_u \leqslant 1.
\end{equation}

We next claim that 
\begin{equation} \label{CL4}
|c_i|\leqslant \left\lceil \frac{n}{2}\right\rceil \text{ for every } i=1,\ldots,n.
\end{equation}
Indeed, if $|S_1|\leqslant \left\lceil \frac{n}{2}\right\rceil$, then the claim follows from $(\ref{IQ-c})$. We now verify the claim for the case $|S_1|> \left\lceil \frac{n}{2}\right\rceil$, so that $|S_2| <  \left\lceil \frac{n}{2}\right\rceil$. Let $E_i$ be the maxtrix obtained by replacing the $i$-th column of $A_i$ by the first $|S_1|$ entries are zeroes, the next $|S_2|$ consecutive entries are $1$, and the last $|S_3|$ entries are zeroes. Then, by expanding $|E_i|$ along the $i$-th column we get $\mathopen|\det(E_i)| \leqslant |S_2|$. On the other hand, by $(\ref{EQ-DC})$ we have $\det(D_i) = \det(C_i)+\det(E_i)$. Thus, $|c_i|\leqslant |d_i|+\mathopen|\det(E_i)|\leqslant |d_i|+|S_2|$. Together with Claims $(\ref{CL1})$ and $(\ref{D1})$ it yields $|c_i| \leqslant 1+|S_2|\leqslant \left\lceil \frac{n}{2}\right\rceil$,
as claimed.

We now turn to prove the lemma by showing that $\delta(\P_t) = dt-e$ for all $t\geqslant r\left\lceil \frac{n}{2}\right\rceil+1$. Indeed, let $t$ be  such an integer, and define $$\alb(t) := (d_1t-c_1,\ldots,d_nt-c_n) \in \Z^n.$$ 

We prove that $\alb(t)$ satisfies the system $(\ref{EQ-basics})$. Firstly, since $\left\lceil \frac{n}{2}\right\rceil < t$, by $(\ref{CL1})$ and $(\ref{CL4})$ we have $\alb(t)_i\geqslant 0$ for $i=1,\ldots,n$.

Next, for $j=1,\ldots, k$, by $(\ref{CL2})$ we have two possible cases:

\noindent {\it Case $1$}: $\sum_{i\in E_j} d_i = 0$. Together with $(\ref{CL4})$ we obtain
$$\sum_{i\in E_j}\alb(t)_i = \left(\sum_{i\in E_j} d_i\right)t-\sum_{i\in E_j}c_i=-\sum_{i\in E_j}c_i\leqslant r\left\lceil \frac{n}{2}\right\rceil \leqslant t-1.$$

\noindent {\it Case $2$}: $\sum_{i\in E_j} d_i = 1$ and $\sum_{i\in E_j}c_i \geqslant  1$. Then,
$$\sum_{i\in E_j}\alb(t)_i = \left(\sum_{i\in E_j} d_i\right)t-\sum_{i\in E_j}c_i=t-\sum_{i\in E_j}c_i\leqslant t-1.$$

In both cases we have $\sum_{i\in E_j} \alb(t)_i \leqslant t-1$.

Further, for $j = k+1,\ldots,m$, by $(\ref{CL3})$ we also have two possible cases:

\noindent {\it Case $1$}: $\sum_{i\in E_j} d_i = 1$ and $\sum_{i\in E_j}c_i  \leqslant 0$. In this case,
$$\sum_{i\in E_j}\alb(t)_i = \left(\sum_{i\in E_j} d_i\right)t-\sum_{i\in E_j}c_i=t-\sum_{i\in E_j}c_i\geqslant t.$$

\noindent {\it Case $2$}: $\sum_{i\in E_j} d_i \geqslant 2$. Let $\rho := |\{i\in E_j \mid d_i = 1\}|$. By $(\ref{CL1})$ and Claim $(\ref{D1})$ we have $\rho = \sum_{i\in E_j} d_i\geqslant 2$. Note that if $d_i=0$ then $c_i\leqslant 0$ since $\alb\in\N^n$. Together with Claim $(\ref{CL4})$, it gives
\begin{align*}
\sum_{i\in E_j}\alb(t)_i &= \left(\sum_{i\in E_j} d_i\right)t-\sum_{i\in E_j}c_i = \rho t-\sum_{i\in E_j:d_i=1}c_i-\sum_{i\in E_j:d_i=0}c_i\\
&\geqslant \rho t-\sum_{i\in E_j:d_i=1}c_i \geqslant  \rho t-\rho \left\lceil \frac{n}{2}\right\rceil.
\end{align*}
On the other hand $r = \rank \H \geqslant |E_j| \geqslant \rho$, so that
$$(\rho-1)t \geqslant (\rho-1)\left(r\left\lceil \frac{n}{2}\right \rceil+1\right)\geqslant (\rho-1)\left(\rho \left\lceil \frac{n}{2}\right \rceil+1\right)\geqslant 
\rho \left\lceil \frac{n}{2}\right \rceil.$$
It follows that
$$\sum_{i\in E_j}\alb(t)_i \geqslant \rho t-\rho \left\lceil \frac{n}{2}\right\rceil=t+(\rho-1)t-\rho \left\lceil \frac{n}{2}\right\rceil\geqslant t.$$

Hence, $\sum_{i\in E_j}\alb(t)_i \geqslant t$ in both cases. Thus, $\alb(t)$ is a solution of the system $(\ref{EQ-basics})$, and thus $\alb(t)\in \P_t$. In particular,
$$\delta(P_t) \geqslant |\alb(t)| = (d_1+\cdots+d_n)t-(c_1+\cdots+c_n) =d't-c'= dt-e.$$
Together with $(\ref{UPBa})$ we conclude that $\delta(\P_t) = dt-e$, and the lemma follows.
\end{proof}

\begin{lem} \label{R5} Assume that $\delta(\P_t) = dt-e$ for all $t\gg 0$. Then, $d\leqslant e$.
\end{lem}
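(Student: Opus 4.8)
The plan is to compare $\P_t$ not with $\C_t$ but with the dilated polytope $\C_{t-1}$, and to read off the inequality $d\leqslant e$ from the inclusion $\P_t\subseteq\C_{t-1}$.

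First I would look carefully at the two defining systems. By $(\ref{EQ-basics})$ the polytope $\P_t$ consists of the nonnegative vectors $(x_1,\ldots,x_n)$ with $\sum_{i\in E_j}x_i\leqslant t-1$ for $j=1,\ldots,k$ and $\sum_{i\in E_j}x_i\geqslant t$ for $j=k+1,\ldots,m$. On the other hand, writing the system $(\ref{EQ-polytope})$ with $t-1$ in place of $t$, the polytope $\C_{t-1}$ consists of the nonnegative vectors $(x_1,\ldots,x_n)$ with $\sum_{i\in E_j}x_i\leqslant t-1$ for $j=1,\ldots,k$ and $\sum_{i\in E_j}x_i\geqslant t-1$ for $j=k+1,\ldots,m$. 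Since $\sum_{i\in E_j}x_i\geqslant t$ certainly implies $\sum_{i\in E_j}x_i\geqslant t-1$, every point of $\P_t$ satisfies all the constraints cutting out $\C_{t-1}$; hence $\P_t\subseteq\C_{t-1}$, and therefore $\delta(\P_t)\leqslant\delta(\C_{t-1})$ whenever $\P_t\neq\emptyset$.

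Next I would use $\C_{t-1}=(t-1)\C_1$ (Remark $\ref{PBP}$) together with $d=\delta(\C_1)$ to obtain $\delta(\C_{t-1})=(t-1)\delta(\C_1)=dt-d$. Feeding this into the hypothesis $\delta(\P_t)=dt-e$, valid for all $t\gg 0$ (which in particular forces $\P_t\neq\emptyset$ for such $t$, in line with Lemmas $\ref{R2}$ and $\ref{R3}$), gives $dt-e\leqslant dt-d$ for all large $t$, that is, $d\leqslant e$, which is the assertion.

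I do not expect a real obstacle here. The content is just the observation that $\P_t$ and $\C_{t-1}$ have the same upper-bound constraints while the lower-bound constraints of $\P_t$ are strictly stronger, so that $\P_t$ sits inside $\C_{t-1}$. The only two points that need a bit of care are the direction of the inclusion — tightening a ``$\geqslant$''-constraint shrinks the feasible region — and the correct index shift, comparing $\P_t$ with $\C_{t-1}$ rather than with $\C_t$: it is precisely the gap between ``$t-1$'' and ``$t$'' on the right-hand sides of the two systems that is responsible for the slack $e-d\geqslant 0$.
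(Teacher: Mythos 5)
Your proof is correct, but it takes a genuinely different route from the paper. The paper proves Lemma \ref{R5} by linear programming duality: it writes $\delta(\P_t)$ as the optimum of an LP whose feasible region is $\P_t$, passes to the dual problem (whose feasible region $\mathcal Q$ does not depend on $t$), and observes that for $t\gg 0$ the dual optimum is attained at a fixed vertex $(u,v)$ of $\mathcal Q$, giving $\delta(\P_t)=at-b$ with $a=\sum u_i-\sum v_j\leqslant \sum u_i=b$; identifying $a=d$, $b=e$ yields $d\leqslant e$. You instead note the inclusion $\P_t\subseteq\C_{t-1}$ (the ``$\leqslant$''-constraints coincide, while the ``$\geqslant t$''-constraints of $\P_t$ are stronger than the ``$\geqslant t-1$''-constraints of $\C_{t-1}$), so $\delta(\P_t)\leqslant\delta(\C_{t-1})=d(t-1)$ by $\C_{t-1}=(t-1)\C_1$ and $d=\delta(\C_1)$, whence $dt-e\leqslant dt-d$ and $d\leqslant e$. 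Your argument is shorter and avoids duality entirely; its one genuine dependence is the identification $d=\delta(\C_1)$, which is exactly how the paper defines $d$ before Remark \ref{POSITIVE} (and, even read abstractly, is forced by Lemmas \ref{R2}--\ref{R3} plus uniqueness of the linear representation), so this is legitimate in context. What the paper's duality proof buys in exchange is that it derives the eventual linear form $at-b$ and the inequality $a\leqslant b$ directly from dual-optimal data, without presupposing that the slope is $\delta(\C_1)$.
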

\begin{proof}
Observe that $\delta(\P_t)$ is the optimal value of the linear programming problem
$$\max(x_1+\cdots+x_n)$$

subject to
$$
\begin{cases}
\sum_{i\in E_j} x_i  \leqslant t-1 & \text{ for } j = 1,\ldots,k,\\
\sum_{i\in E_j} -x_i  \leqslant  -t & \text{ for } j = k+1,\ldots,m,\\
x_1\geqslant 0,\ldots,x_n\geqslant 0.
\end{cases}
$$

Let $A$ be the matrix of the first $k$ inequalities of this system and $B$ the matrix of the next $q:=m-k$ inequalities.  Then, the dual problem is
$$\min((y_1+\cdots+y_k-z_1-\cdots-z_q)t-(y_1+\cdots+y_k))$$

subject to

\begin{equation}\label{dual}
\begin{cases}
(y_1,\ldots,y_k)A+(z_1,\ldots,z_q)B \geqslant \mathbf{1}_n^T,\\
y_1\geqslant 0,\ldots,y_k\geqslant 0, z_1\geqslant 0,\ldots,z_q\geqslant 0,
\end{cases}
\end{equation}

where $\mathbf{1}_n =(1,\ldots,1)\in \R^n$.

Since $(\ref{dual})$ defines a pointed convex polyhedron in $\R^m$, say $\mathcal Q$, we deduce that there is a vertex $(u_1,\ldots,u_k,v_1,\ldots,v_q)$ of $\mathcal Q$ such that in this convex polyhedron
$$\min((y_1+\cdots+y_k-z_1-\cdots-z_q)t-(y_1+\cdots+y_k)) = at-b \text{ for } t\gg 0,$$
where $a =u_1+\cdots+u_k-(v_1+\cdots+v_q)$ and $b=u_1+\cdots+u_k$. Since $u_i\geqslant 0$ and $v_j\geqslant 0$ for every $i$ and $j$, we have $a\leqslant b$.

By the Duality theorem of linear programing (see e.g. \cite[Corollary 7.1g]{S}) we have
$$\max\{x_1+\cdots+x_n\mid (x_1,\ldots,x_n)\in \P_t\} = at-b \text{ for } t\gg 0.$$
Therefore, $\delta(P_t) = dt-e = at-b$ for $t\gg 0$. It follows that $d = a$ and $e = b$. Consequently, $d\leqslant e$, as required.
\end{proof}

\section{Regularity of powers of ideals}

Let $\H=(\V,\E)$ be a unimodular hypergraph on the vertex set $\V =\{1,\ldots,n\}$. Without loss of generality we may assume that $\E\ne\emptyset$ and thus $J(\H)\ne 0$. Let $r:=\rank(\H)$. In this section we will prove that $a_i(R/J(\H)^s)$ is asymptotically linear in $s$; and then settle the problem on bounding $s_0$ such that  $\reg J(\H)^s$ is a linear function for $s\geqslant s_0$.

\begin{lem} \label{main-lem} Assume that $a_p(R/J(\H)^s)\ne -\infty$ for some $p\geqslant 0$ and $s\geqslant 1$. Then,  there are positive integers $d$ and $e$ such that
\begin{enumerate}
\item $d\leqslant e\leqslant n^2$;
\item $a_p(R/J(\H)^t) \geqslant dt-e$ for all $t \geqslant r\left\lceil \frac{n}{2}\right\rceil+1$; and
\item If $s\geqslant r\left\lceil \frac{n}{2}\right\rceil+1$, then $a_p(R/J(\H)^s) = ds-e$.
\end{enumerate}
\end{lem}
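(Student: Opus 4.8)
The plan is to connect the $a_p$-invariant of $R/J(\H)^t$ directly to the quantity $\delta(\P_t)$ studied in Section~2, and then transport the conclusions of Lemmas~\ref{R4} and~\ref{R5}. First I would unpack the hypothesis $a_p(R/J(\H)^s)\ne-\infty$: by definition this means $H^p_{\mi}(R/J(\H)^s)_{\btb}\ne 0$ for some $\btb$, and since $J(\H)$ is a monomial ideal we may take $\btb\in\N^n$ (local cohomology of $R/I$ in nonnegative degrees suffices here because, as $\H$ is unimodular and hence $J(\H)$ normally torsion-free, the relevant degree complexes $\Delta_{\btb}(J(\H)^s)$ only depend on the pattern of edge-sums $\sum_{i\in E}\beta_i$ via Lemma~\ref{uni-complex}, which forces the support to sit in $\N^n$ up to the standard reduction). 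With $\btb\in\N^n$ fixed, Lemma~\ref{uni-complex} identifies $\Delta_{\btb}(J(\H)^s)=\langle \V\setminus E_1,\ldots,\V\setminus E_k\rangle$ for a suitable relabelling of edges with $1\le k\le m$, and Lemma~\ref{TA} gives $\widetilde H_{p-1}(\langle\V\setminus E_1,\ldots,\V\setminus E_k\rangle;K)\ne 0$ (here $|G_{\btb}|=0$).

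The crucial observation is that this reduced homology being nonzero is a property of the \emph{combinatorial type} of the complex, i.e.\ of the pair $(k$, which edges are "short''$)$, and is independent of $\btb$ itself. So the same simplicial complex $\Delta=\langle\V\setminus E_1,\ldots,\V\setminus E_k\rangle$ has nonvanishing $(p-1)$-th reduced homology, and for \emph{any} $t\ge 1$ and any $\alb\in\P_t\cap\N^n$ we get $\Delta_{\alb}(J(\H)^t)=\Delta$ by the displayed consequence of Lemma~\ref{uni-complex} in Section~2, hence $H^p_{\mi}(R/J(\H)^t)_{\alb}\ne 0$ by Lemma~\ref{TA} again. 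Therefore
$$a_p(R/J(\H)^t)\ \geqslant\ \max\{|\alb|\mid \alb\in\P_t\cap\N^n\}\ =\ \delta(\P_t),$$
the last equality because $\P_t$ is an integer polytope (all vertices in $\N^n$, by unimodularity via \cite[Theorem~19.1]{S}), so its $\delta$ is attained at a lattice point. This already yields part~(2) once we know $\P_t\ne\emptyset$ for $t\geqslant r\lceil n/2\rceil+1$: Lemma~\ref{R2} gives $\P_n\ne\emptyset$, and Lemma~\ref{R3} propagates nonemptiness upward, so $\P_t\ne\emptyset$ for all $t\geqslant n$; since $n\leqslant r\lceil n/2\rceil+1$ (as $r\geqslant 1$), we are fine in the stated range. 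Applying Lemma~\ref{R4} we get integers $d,e$ with $e\leqslant n^2$ and $\delta(\P_t)=dt-e$ for $t\geqslant r\lceil n/2\rceil+1$; Remark~\ref{POSITIVE} gives $d\geqslant 1$, and Lemma~\ref{R5} (whose hypothesis $\delta(\P_t)=dt-e$ for $t\gg 0$ is met) gives $d\leqslant e$, so in particular $e\geqslant d\geqslant 1$: this is part~(1).

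It remains to upgrade the inequality in part~(2) to the equality in part~(3). For this I would prove the reverse bound $a_p(R/J(\H)^s)\leqslant\delta(\P_s)$ when $s\geqslant r\lceil n/2\rceil+1$. Suppose $H^p_{\mi}(R/J(\H)^s)_{\btb'}\ne 0$ for some $\btb'\in\N^n$ with $|\btb'|=a_p(R/J(\H)^s)$; running the same argument, $\btb'$ lies in the set $\P'_s$ of solutions of a system of the form \eqref{EQ-basics} associated to \emph{its own} choice of "short'' edges $E'_1,\ldots,E'_{k'}$, and the corresponding complex has nonvanishing $(p-1)$-th homology. The subtlety — and the main obstacle — is that $a_p(R/J(\H)^s)$ is the maximum of $|\btb'|$ over \emph{all} admissible edge-patterns yielding nonzero $\widetilde H_{p-1}$, whereas $\delta(\P_s)$ as set up in Section~2 refers to one fixed pattern. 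I would resolve this by noting there are only finitely many patterns, so $a_p(R/J(\H)^t)=\max_{\sigma}\delta(\P^{\sigma}_t)$ over the finite set $\Sigma$ of patterns $\sigma$ with nonvanishing homology; each $\delta(\P^\sigma_t)$ is, by Lemmas~\ref{R3}–\ref{R5}, eventually a linear function $d_\sigma t-e_\sigma$ with $d_\sigma\leqslant e_\sigma$ once $\P^\sigma_t\ne\emptyset$, and the maximum of finitely many linear functions that agree with $a_p$ infinitely often is eventually linear; pinning down that the common stable range can be taken to be $t\geqslant r\lceil n/2\rceil+1$ requires checking that the threshold in Lemma~\ref{R4} is uniform in $\sigma$ (it depends only on $n$ and $r=\rank\H$, not on the pattern), which it is. Combining the lower bound from part~(2) with this finite-max upper bound gives $a_p(R/J(\H)^s)=ds-e$ for $s\geqslant r\lceil n/2\rceil+1$, completing part~(3).
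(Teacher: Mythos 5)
There is a genuine gap, and it sits exactly where the paper's proof does its real work: your reduction ``we may take $\btb\in\N^n$'' is false. Lemma \ref{uni-complex} is stated only for nonnegative degrees, and Takayama's formula (Lemma \ref{TA}) computes $H^p_{\mi}(R/I)_{\btb}$ from $\widetilde H_{p-|G_{\btb}|-1}(\Delta_{\btb}(I);K)$, so degrees with negative coordinates contribute with a homological shift and cannot be discarded. In general \emph{every} degree with $H^p_{\mi}(R/J(\H)^s)\ne 0$ has $G_{\btb}\ne\emptyset$: e.g.\ for the bipartite graph consisting of two disjoint edges $\{1,2\},\{3,4\}$, one has $J=(x_1,x_2)\cap(x_3,x_4)$ and $H^2_{\mi}(R/J^s)_{\btb}\ne 0$ forces (say) $\beta_1=\beta_2=-1$, with $a_2(R/J^s)=s-3$; there is no $\btb\in\N^4$ to start from, and your lower bound $a_p(R/J(\H)^t)\geqslant\delta(\P_t)$ would read $a_2\geqslant t-1$, which is wrong. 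The paper handles this by setting $G_{\btb'}=\{m+1,\ldots,n\}$, passing to the induced unimodular hypergraph $\H'$ on $\{1,\ldots,m\}$ via $J(\H)R_{G_{\btb'}}\cap S=J(\H')$, running your polytope argument in $\R^m$ (so $\delta(\P_t)=dt-f$ with $d\leqslant f\leqslant m^2$ by Lemmas \ref{R4}, \ref{R5} and Remark \ref{POSITIVE}), and then padding an optimal lattice point of $\P_t$ with $-1$'s; this is where the correct constant $e=f+(n-m)\leqslant n^2$ comes from, and your version misses this $n-m$ correction entirely.

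Your treatment of part (3) is also shakier than it needs to be. Note that (3) only asserts the equality for the \emph{given} exponent $s$ from the hypothesis, not for all $s$ in the range, so no uniform ``finite maximum over patterns'' argument is required: the paper fixes $\btb'$ attaining $a_p(R/J(\H)^s)=|\btb'|$, and the lower bound built from its own pattern forces $|\btb'|\geqslant\delta(\P_s)-(n-m)$, while $\beta_i\leqslant -1$ on $G_{\btb'}$ and $\btb\in\P_s$ give the reverse inequality; hence all negative entries of $\btb'$ equal $-1$, $|\btb|=\delta(\P_s)$, and $a_p(R/J(\H)^s)=ds-e$ when $s\geqslant r\left\lceil\frac{n}{2}\right\rceil+1$. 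Your finite-max sketch, besides resting on the faulty $\N^n$ reduction, does not by itself show that the maximizing pattern yields the same pair $(d,e)$ on the stated range, nor does it justify the threshold $r\left\lceil\frac{n}{2}\right\rceil+1$; the uniformization over $s$ is done later, in Theorem \ref{T1}, by comparing two applications of the lemma, not inside the lemma itself.
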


\begin{proof}  Since $a_p(R/J(\H)^s) \ne -\infty$, there is $\btb' =(\beta_1,\ldots,\beta_n)\in \Z^n$ such that 
$$H_{\mi}^p(R/J(\H)^s)_{\btb'}\ne 0 \text{ and }  a_p(R/J(\H)^s) = |\btb'|.$$ 
By Lemma $\ref{TA}$ we have
\begin{equation}\label{N1}
\dim_K \h_{p-|G_{\btb'}|-1}(\Delta_{\btb'}(J(\H)^s);K) =\dim_K H_{\mi}^p(R/J(\H)^s)_{\btb'} \ne 0.
\end{equation}
In particular, $\Delta_{\btb'}(J(\H)^s)$ is not acyclic.

If $G_{\btb'} =[n]$, then $\Delta_{\btb'}(J(\H)^s)$ is either $\{\emptyset\}$ or a void complex. Since it is not acyclic, $\Delta_{\btb'}(J(\H)^s) =\{\emptyset\}$. But then by $(\ref{degree-complex})$ we would have $J(\H) = 0$, a contradiction. Therefore, $G_{\btb'} \ne [n]$.

We may assume that $G_{\btb'} =\{m+1,\ldots,n\}$ for some $1\leqslant m\leqslant n$. Set $S := K[x_1,\ldots,x_m]$. Let $\H'$ be the hypergraph on the vertex set $\V' =\{1,\ldots,m\}$ with the edge set $\E' =\{E\in \E \mid E \subseteq \V'\}$. Since $A(\H')$ is a submatrix of $A(\H)$, we have $\H'$ is also a unimodular hypergraph. Moreover, by $(\ref{intersect})$ we obtain:
\begin{equation}\label{EQ-LOCALIZATION}
J(\H)R_{G_{\btb'}} \cap S = J(\H').
\end{equation}

Let $\btb := (\beta_1,\ldots,\beta_m) \in \N^m$. By using Formulas $(\ref{degree-complex})$ and $(\ref{EQ-LOCALIZATION})$ we get
\begin{equation}\label{POWER}
\Delta_{\btb}(J(\H')^t) = \Delta_{\btb'}(J(\H)^t) \text{ for any } t\geqslant 1.
\end{equation}
Together with $(\ref{N1})$, it gives $\h_{p-|G_{\btb'}|-1}(\Delta_{\btb}(J(\H')^s);K)\ne 0$. By Lemma $\ref{TA}$ we get
$$H_{\ni}^{p-|G_{\btb'}|}(S/J(\H')^s)_{\btb}\ne 0,$$
where $\ni = (x_1,\ldots,x_m)$ is the homogeneous maximal ideal of $S$.

Suppose that $\E' =\{E_1,\ldots,E_k\}$ where $k\geqslant 1$. Then, by Equation $(\ref{complex-cover})$ $$\Delta(J(\H')) = \left<\V'\setminus E_1,\ldots, \V'\setminus E_k\right>.$$
By Lemma $\ref{uni-complex}$ we may assume that 
$$\Delta_{\btb}(J(\H')^s) =\left<\V'\setminus E_1,\ldots, \V'\setminus E_q\right>$$
where $1\leqslant q\leqslant k$. 

For each integer $t\geqslant 1$, let $\mathcal P_t$ be the set of solutions in $\R^m$ of the following system:
\begin{equation}\label{Qt}
\begin{cases}
\sum_{i\in E_j} x_i  \leqslant t-1 & \text{ for } j = 1,\ldots,q,\\
\sum_{i\in E_j} x_i  \geqslant  t & \text{ for } j = q+1,\ldots,k,\\
x_1\geqslant 0,\ldots,x_m\geqslant 0.
\end{cases}
\end{equation}
Then, $\btb \in \mathcal P_s$. From $(\ref{Qt})$ and Lemma $\ref{uni-complex}$ one has
\begin{equation}\label{EQ001}
\Delta_{\alb}(J(\H')^t) = \left<\V\setminus E_1,\ldots, \V\setminus E_q\right> = \Delta_{\btb}(J(\H')^s) \ \text{ whenever } \alb\in \P_t \cap \N^m .
\end{equation}

Together Lemmas $\ref{R4}$ and $\ref{R5}$ with Remark $\ref{POSITIVE}$, there are positive integers $d$ and $f$ with $d\leqslant f\leqslant m^2$ such that
$$\delta(\P_t) = dt-f \text{ , for all } t  \geqslant r\left\lceil \frac{m}{2}\right\rceil+1.$$

For any $t\geqslant r\left\lceil \frac{n}{2}\right\rceil+1$, we also have $\delta(\P_t) =dt-f$. Let $\alb=(\alpha_1,\ldots,\alpha_m)$ be a vertex of $\P_t$ such that $\delta(\P_t) = |\alb|$. Since $A(\H')$ is totally unimodular, $\alb \in \N^m$.

Let $\alb' = (\alpha_1,\ldots,\alpha_m,-1,\ldots,-1)\in \Z^n$. Then, $G_{\alb'} = G_{\btb'}$, and by $(\ref{POWER})$ and $(\ref{EQ001})$ we obtain
$$\Delta_{\alb'}(J(\H)^t) = \Delta_{\alb}(J(\H')^t) = \Delta_{\btb}(J(\H')^s) =  \Delta_{\btb'}(J(\H)^s).$$
By Lemma $\ref{TA}$ we have
\begin{align*}
\dim_K H_{\mi}^p(R/J(\H)^t)_{\alb'} &= \dim_K \h_{i-|G_{\alb'}|-1}(\Delta_{\alb'}(J(\H)^t);K)\\
&=\dim_K \h_{i-|G_{\btb'}|-1}(\Delta_{\btb'}(J(\H)^s);K)\ne 0.
\end{align*}
Consequently, $H_{\mi}^p(R/J(\H)^t)_{\alb'}\ne 0$, and so
$$a_p(R/J(\H)^t) \geqslant |\alb'| = |\alb|-(n-m) = dt -(f+n-m) = dt-e,$$
where we set $e := f+n-m$. Note that $d\leqslant f \leqslant e \leqslant m^2+(n-m)\leqslant n^2$. 

This argument shows that $\btb' = (\beta_1,\ldots,\beta_m,-1,\ldots,-1)$ and $|\btb| = \delta(\P_s)$. Therefore, if $s \geqslant r\left\lceil \frac{n}{2}\right\rceil+1$ then $|\btb| = \delta(\P_s) = ds-f$ and 
$$a_p(R/J(\H)^s) = |\btb'| = |\btb|-(n-m) = ds-(f+n-m) = ds-e,$$
and the lemma follows.
\end{proof}

We now prove that $a_i(R/J(\H)^s)$ is asymptotically linear in $s$.

\begin{thm}\label{T1} For any $i$, we have either $a_i(R/J(\H)^s)=-\infty$ for all $s\geqslant 1$ or  there are positive integers $d$ and $e$ with $d\leqslant e$ such that $a_i(R/J(\H)^s)=ds-e$ for all $s\geqslant n^2$.
\end{thm}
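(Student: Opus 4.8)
The plan is to deduce the theorem from Lemma \ref{main-lem} together with a finiteness argument about upper envelopes of lines. If $a_i(R/J(\H)^s)=-\infty$ for every $s\geqslant 1$ there is nothing to prove, so assume $a_i(R/J(\H)^{s_0})\ne-\infty$ for some $s_0\geqslant 1$, and write $T:=r\left\lceil\frac{n}{2}\right\rceil+1$. An elementary computation shows $T\leqslant n^2$ once $n\geqslant 2$; the case $n=1$ is handled directly, since then $J(\H)=(x_1)$, so $a_0(R/J(\H)^s)=s-1$ and $a_i(R/J(\H)^s)=-\infty$ for $i\geqslant 1$, and the assertion holds with $d=e=1$. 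So from now on I would assume $n\geqslant 2$.

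First I would apply Lemma \ref{main-lem} with $p=i$ and $s=s_0$. This already produces a lower bound $a_i(R/J(\H)^t)\geqslant d_0t-e_0$ valid for all $t\geqslant T$, hence $a_i(R/J(\H)^t)$ is finite for every $t\geqslant T$. Consequently, for each integer $t\geqslant T$ I can re-apply Lemma \ref{main-lem} with $s=t$, obtaining positive integers $d_t,e_t$ with $d_t\leqslant e_t\leqslant n^2$, with $a_i(R/J(\H)^{t'})\geqslant d_tt'-e_t$ for all $t'\geqslant T$, and with $a_i(R/J(\H)^t)=d_tt-e_t$. Assembling these statements over all $t$ gives, for every $t'\geqslant T$,
$$a_i(R/J(\H)^{t'})=\max\{\,d_tt'-e_t\mid t\geqslant T\,\},$$
that is, on the range $s\geqslant T$ the function $s\mapsto a_i(R/J(\H)^s)$ is the upper envelope of the family of lines $x\mapsto d_tx-e_t$.

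The key point is that only \emph{finitely many} distinct lines occur, since $d_t,e_t\in\{1,\ldots,n^2\}$ for all $t$. I would then set $d:=\max_{t\geqslant T}d_t$ and $e:=\min\{\,e_t\mid t\geqslant T,\ d_t=d\,\}$; these are positive integers with $d\leqslant e$ because $d_t\leqslant e_t$ always. For any $t'\geqslant n^2$ (hence $t'\geqslant T$) and any $t$ with $d_t<d$, choosing $t''$ with $d_{t''}=d$ gives $(d_{t''}t'-e_{t''})-(d_tt'-e_t)=(d-d_t)t'-(e_{t''}-e_t)\geqslant t'-(n^2-1)\geqslant 1>0$, so lines of non-maximal slope never realize the maximum at $t'$, and among the maximal-slope lines the one with intercept $-e$ wins. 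Hence $a_i(R/J(\H)^{t'})=dt'-e$ for all $t'\geqslant n^2$, which is the claim.

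I do not expect a serious obstacle: all the substantive analysis — the linear-programming duality and the totally-unimodular determinant estimates — is already packaged inside Lemmas \ref{R4}, \ref{R5} and \ref{main-lem}. The only point requiring care is the bookkeeping that makes the threshold come out to exactly $n^2$ rather than something larger, and this is controlled precisely by the bound $d_t\leqslant e_t\leqslant n^2$ from Lemma \ref{main-lem}(1) together with the integrality gap $d-d_t\geqslant 1$ in the slope-separation estimate above.
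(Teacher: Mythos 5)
Your proposal is correct and follows essentially the same route as the paper: everything is deduced from Lemma \ref{main-lem}, using that the slopes are positive integers, the intercepts are bounded by $n^2$, and $s\geqslant n^2$, so an integral slope gap beats any intercept difference. The paper packages this as a direct comparison of two applications of the lemma (at a fixed $s_0\geqslant n^2$ and at an arbitrary $s\geqslant n^2$) forcing the two linear forms to coincide, whereas you phrase it as the upper envelope of finitely many lines --- a cosmetic difference only.
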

\begin{proof} If $n=1$, then $R=K[x_1]$ and $J(\H) = (x_1)$, and then the theorem holds for this case. We therefore may assume that $n\geqslant 2$.

 Assume that $a_i(R/J(\H)^k)\ne -\infty$ for some $k\geqslant 1$. By Lemma \ref{main-lem} we have $a_i(R/J(\H)^t)\ne-\infty$ for every $t\geqslant r\left\lceil \frac{n}{2}\right\rceil+1$.

Let $s_0$ be an integer such that $s_0\geqslant n^2$. Note that $r\left\lceil \frac{n}{2}\right\rceil+1\leqslant n^2$ as $n\geqslant 2$, so $a_i(R/J(\H)^{s_0})\ne -\infty$. By Lemma \ref{main-lem}, there are positive integers $d$ and $e$ with $d\leqslant e\leqslant n^2$ such that
\begin{enumerate}
\item [(a)] $a_i(R/J(\H)^{s_0}) = ds_0-e$; and 
\item [(b)] $a_i(R/J(\H)^t) \geqslant dt-e$ for all $t\geqslant r\left\lceil\frac{n}{2}\right\rceil+1$.
\end{enumerate}
We will prove that $a_i(R/J(\H)^s) = ds-e$ for all $s\geqslant n^2$. Indeed, for any $s\geqslant n^2$, by Lemma $\ref{main-lem}$ again, there are positive integers $a$ and $b$ with $a\leqslant b\leqslant n^2$ such that
\begin{enumerate}
\item [(c)] $a_i(R/J(\H)^{s}) = as-b$; and 
\item [(d)] $a_i(R/J(\H)^t) \geqslant at-b$ for all $t\geqslant r\left\lceil\frac{n}{2}\right\rceil+1$.
\end{enumerate}

From $(b)$ and $(c)$ we have $as-b \geqslant ds-e$, or equivalently, 
\begin{equation}\label{MF1}
(a-d)s \geqslant b-e.
\end{equation}
Note that $1\leqslant b,e\leqslant n^2$, so $|b-e|= \max\{b-e,e-b\}\leqslant n^2-1 < s$. Together with the inequality $(\ref{MF1})$, this fact forces $a\geqslant d$. 

Similarly,  by using $(a)$ and $(d)$ we obtain $d\geqslant a$ and
\begin{equation}\label{MF2}
(d-a)s_0 \geqslant e-b.
\end{equation}

Thus, $a=d$. Together this fact with Inequalities $(\ref{MF1})$ and $(\ref{MF2})$, respectively, we get $e\geqslant b$ and $b\geqslant e$, respectively. Hence, $b=e$. Then, $$a_i(R/J(\H)^s) = as-b = ds-e,$$ 
and the proof is complete.
\end{proof}

The following theorem is the main result of this paper.

\begin{thm} \label{T2} Let $\H$ be a unimodular hypergraph with $n$ vertices and rank $r$. Then there is a non-negative integer $e\leqslant \dim R/J(\H)-d(J(\H))+1$ such that $\reg J(\H)^s = d(J(\H))s+e$ for all $s\geqslant r\left\lceil\frac{n}{2}\right\rceil+1$.
\end{thm}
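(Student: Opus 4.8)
The plan is to reduce $\reg R/J(\H)^s$ to a maximum of finitely many linear functions in $s$ via Theorem~\ref{T1} and Lemma~\ref{main-lem}, to identify the common slope of the dominant line with $d(J(\H))$ using a distinguished minimal generator of $J(\H)^s$ together with Kodiyalam's bound, and finally to verify that this maximum has already stabilized at $s_0:=r\left\lceil\tfrac n2\right\rceil+1$.

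First I would dispose of the case $r=1$ by hand: then every edge is a singleton, $\H$ has a unique minimal vertex cover $\tau$, and $J(\H)=(\x_\tau)$ is principal, so $J(\H)^s=(\x_\tau^{\,s})$ is again principal, $R/J(\H)^s$ is a hypersurface ring, and $\reg J(\H)^s=|\tau|\,s=d(J(\H))s$ for every $s\geqslant1$; the conclusion holds with $e=0\leqslant\dim R/J(\H)-d(J(\H))+1$. Henceforth assume $r\geqslant2$, which gives $s_0\geqslant n+1$, whereas $\dim R/J(\H)\leqslant n-1$ since $0\ne J(\H)\ne R$ — a gap that will do all the work later. Next I would set up the ``maximum of lines''. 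Let $P$ be the finite, nonempty set of $i$ with $a_i(R/J(\H)^s)\ne-\infty$ for some $s$; by Lemma~\ref{main-lem}, for each $i\in P$ there are positive integers $d_i\leqslant e_i\leqslant n^2$ with $a_i(R/J(\H)^s)=d_is-e_i$ for all $s\geqslant s_0$, while $a_j(R/J(\H)^s)=-\infty$ identically for $j\notin P$. Since $\reg R/J(\H)^s=\max_i\{a_i(R/J(\H)^s)+i\}$ and $\dim R/J(\H)^s=\dim R/J(\H)$, this yields $\reg R/J(\H)^s=\max_{i\in P}(d_is-e_i+i)$ for all $s\geqslant s_0$, and for $s\gg0$ the value is $Ds-E$ with $D:=\max_{i\in P}d_i$ and $-E:=\max\{\,i-e_i : i\in P,\ d_i=D\,\}$.

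To pin down $D$, I would take a minimal vertex cover $\tau$ with $|\tau|=d(J(\H))$ and check that $\x_\tau^{\,s}$ is a minimal generator of $J(\H)^s$: using $J(\H)^s=J(\H)^{(s)}=\bigcap_{E}(x_i:i\in E)^s$ and the fact that, for each $i\in\tau$, minimality of $\tau$ produces an edge $E_0$ with $E_0\cap\tau=\{i\}$, the exponents of $\x_\tau^{\,s}/x_i$ sum to $s-1$ over $E_0$, so $\x_\tau^{\,s}/x_i\notin J(\H)^s$. Hence $\reg R/J(\H)^s=\reg J(\H)^s-1\geqslant d(J(\H))s-1$ for all $s$, forcing $D\geqslant d(J(\H))$ and $E\leqslant1$; since Kodiyalam's bound \cite{K} gives $D\leqslant d(J(\H))$, we get $D=d(J(\H))$. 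Setting $e:=1-E\geqslant0$ gives $\reg J(\H)^s=\reg R/J(\H)^s+1=d(J(\H))s+e$ for $s\gg0$, and the bound $e\leqslant\dim R/J(\H)-d(J(\H))+1$ follows because any $i\in P$ with $d_i=D$ satisfies $e_i\geqslant d_i=d(J(\H))$ and $i\leqslant\dim R/J(\H)$, whence $-E=\max(i-e_i)\leqslant\dim R/J(\H)-d(J(\H))$.

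The last step, which I expect to be the only real obstacle, is to upgrade ``$s\gg0$'' to ``$s\geqslant s_0$''. I would show $\max_{i\in P}(d_is-e_i+i)=Ds-E$ for every $s\geqslant s_0$ (which then gives $\reg J(\H)^s=d(J(\H))s+e$ on the whole range): the inequality ``$\geqslant$'' is immediate from the line realizing $D$ and $-E$, and for ``$\leqslant$'' I would suppose $d_ps_1-e_p+p>Ds_1-E$ for some $p\in P$ and $s_1\geqslant s_0$; then $d_p\ne D$ (otherwise the definition of $-E$ is violated), so $D-d_p\geqslant1$ and $s_1\leqslant(E+p-e_p)/(D-d_p)\leqslant E+p-e_p\leqslant 1+(n-1)-1=n-1<s_0$, a contradiction. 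The crux is exactly why this crude estimate suffices: it works as soon as $s_0\geqslant n+1$, i.e. precisely when $r\geqslant2$, combining the bounds $e_i\geqslant d_i\geqslant1$, $i\leqslant\dim R/J(\H)\leqslant n-1$, $E\leqslant1$ — and, underneath everything, the uniformity in Lemma~\ref{main-lem}, namely that a single pair $(d_i,e_i)$ controls $a_i(R/J(\H)^s)$ over the entire range $s\geqslant s_0$ and not merely asymptotically.
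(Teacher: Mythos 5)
There is a genuine gap at the very first reduction, and it sits exactly on the range of exponents the theorem is about. You assert, citing Lemma~\ref{main-lem}, that for each $i\in P$ there is a \emph{single} pair $(d_i,e_i)$ with $a_i(R/J(\H)^s)=d_is-e_i$ for all $s\geqslant s_0:=r\left\lceil\frac{n}{2}\right\rceil+1$. Lemma~\ref{main-lem} does not say this: for a fixed exponent $s$ with $a_i(R/J(\H)^s)\ne-\infty$ it produces a pair $(d,e)$ \emph{depending on $s$} (it comes from a vertex of the polytope attached to the degree $\btb'$ realizing $a_i$ at that particular $s$), with equality only at that $s$ and merely the lower bound $a_i(R/J(\H)^t)\geqslant dt-e$ for the other $t\geqslant s_0$. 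Upgrading this to a single linear function is precisely what Theorem~\ref{T1} does, and it only achieves it for $s\geqslant n^2$, because the intercepts produced by Lemma~\ref{main-lem} are bounded only by $n^2$: comparing two exponents $s_1<s_2$ in the window $[s_0,n^2)$, the inequalities $(d_1-d_2)s_1\geqslant e_1-e_2$ and $(d_2-d_1)s_2\geqslant e_2-e_1$ do not force $d_1=d_2$ when $s_1,s_2<n^2$, so an individual $a_i$ may a priori lie strictly above its asymptotic line on that window. Consequently your identity $\reg R/J(\H)^s=\max_{i\in P}(d_is-e_i+i)$ for all $s\geqslant s_0$ — on which the whole stabilization argument rests — is unjustified exactly on $[s_0,n^2)$, which is the nontrivial part of the claim (for $s\geqslant n^2$ the statement already follows from Theorem~\ref{T1} plus Kodiyalam). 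Your closing remark even identifies this ``uniformity in Lemma~\ref{main-lem}'' as the crux, but the lemma does not provide it.

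The paper avoids this by never asserting linearity of the individual $a_i$'s below $n^2$. For each $s\geqslant s_0$ it chooses an index $i$ with $\reg R/J(\H)^s=a_i(R/J(\H)^s)+i$, applies Lemma~\ref{main-lem} at that $s$ to get a line $cs+f$ through this value which lower-bounds $\reg R/J(\H)^t$ for all $t\geqslant s_0$, and crucially records that its intercept satisfies $f\leqslant \dim R/J(\H)-c\leqslant n-1$ (because $f=-b'+i$ with $b'\geqslant c$ and $i\leqslant\dim R/J(\H)$), while the asymptotic line $d(J(\H))s+e$ has $e\geqslant -1$ by Kodiyalam. The comparison then only involves differences of size at most about $n$, not $n^2$, which is why the threshold $r\left\lceil\frac{n}{2}\right\rceil+1>n$ suffices. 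So the correct fix is not a cruder estimate on your $e_i\leqslant n^2$ data, but replacing the ``max of fixed lines'' picture by this per-exponent argument applied to the regularity itself (your observations that the eventual slope is $d(J(\H))$ and that $e\geqslant 0$, via the minimal generator $\x_\tau^s$, are fine and compatible with either route).
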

\begin{proof} If $n = 1$ or $r=1$, then $J(\H)$ is a principal ideal, and the theorem holds for these cases. Hence we may assume that $n \geqslant 2$ and $r\geqslant 2$.

Let $\delta:=\dim R/J(\H)$. Then, $\delta  < n$ as $J(\H)\ne 0$.  Since
$$\reg R/J(\H)^t) = \max\{a_i(R/J(\H)^t)+i\mid i=0,\ldots,\delta\} \text{ for } t\geqslant 1,$$
by Theorem $\ref{T1}$ we imply that there are integers $t_0$, $d$ and $e$ with $e \leqslant \delta-d$ such that
\begin{equation}\label{basic-eq}
\reg R/J(\H)^t = dt+e \text{ for all } t\geqslant t_0.
\end{equation}

Note that $\reg J(\H)^t=\reg R/J(\H)^t+1$, so when comparing with \cite[Theorem 5]{K} we deduce that $d = d(J(\H))$ and $e\geqslant -1$. Consequently, $e\leqslant \delta - d(J(\H))$.

Let $k$ be an integer such that $k \geqslant  \max\{t_0,2n^2\}$. Assume that $\reg R/J(\H)^k = a_i(R/J(\H)^k)+i$, for some $0\leqslant i\leqslant \delta$. By Lemma $\ref{main-lem}$, there are non-negative integers $a$ and $b'$ with $a\leqslant b'\leqslant n^2$ such that
$a_i(R/J(\H)^k) = ak-b'$ and $a_i(R/J(\H)^t) \geqslant at-b'$ for all $t\geqslant r\left\lceil\frac{n}{2}\right\rceil+1$.

Let $b:=-b'+i$. Then, $-n^2\leqslant b\leqslant i-a \leqslant \delta-a$,  $\reg R/J(\H)^k = ak+b$ and
\begin{equation}\label{F0}
\reg R/J(\H)^t \geqslant at+b \text{ for all } t\geqslant r\left\lceil\frac{n}{2}\right\rceil+1.
\end{equation}

As $\reg R/J(\H)^k = ak+b = dk+e$, we have $(d-a)k = b-e$. Together with the fact $|b-e|\leqslant |b|+e\leqslant n^2+\delta+1\leqslant n^2+n <2n^2\leqslant k$, it forces $d = a$, and so $b=e$. Thus, $(\ref{F0})$ becomes
\begin{equation}\label{F1}
\reg R/J(\H)^t \geqslant dt+e \text{ for all } t\geqslant r\left\lceil\frac{n}{2}\right\rceil+1.
\end{equation}

We next prove that these inequalities are in fact equalities and therefore the theorem follows because $\reg J(\H)^t=\reg R/J(\H)^t+1$ for $t\geqslant 1$.

In order to prove this, let $s$ be an integer such that $s\geqslant r\left\lceil\frac{n}{2}\right\rceil+1$. By the argument above, there are integers $c$ and $f$ with $f\leqslant \delta-c$ such that
$\reg R/J(H)^s =cs+f$ and
\begin{equation}\label{F2}
\reg R/J(H)^t \geqslant ct+f \text{ for all } t\geqslant r\left\lceil\frac{n}{2}\right\rceil+1.
\end{equation}

From $(\ref{basic-eq})$ and $(\ref{F2})$ we get $c\leqslant d$. Since $\reg R/J(H)^s =cs+f$, by $(\ref{F1})$ we have $cs+f\geqslant ds+e$, so $(d-c)s\leqslant f-e$. As $c\leqslant d$, it follows that $f \geqslant e$. In particular, $f\geqslant -1$. Observe that $n \leqslant s$ as $r\geqslant 2$, so that
$$f-e\leqslant (\delta-c)+1 \leqslant \delta < n\leqslant s.$$ 
Together with $(d-c)s\leqslant f-e$ and $c\leqslant d$, this fact forces $d-c=0$, i.e. $d = c$. Combining this equality with $(\ref{basic-eq})$ and $(\ref{F2})$ we get $e\geqslant f$, and therefore $e = f$.

Finally, because $d=c$ and $e=f$, we have $\reg R/J(\H)^s = cs+f = ds+e$, and the proof of the theorem is complete.
\end{proof}

\begin{cor} Let $G$ be a bipartite graph with $n$ vertices. Then, $\reg J(G)^s$ is a linear function in $s$ for all $s\geqslant n+2$.
\end{cor}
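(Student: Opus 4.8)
The plan is to deduce this immediately from Theorem \ref{T2} together with the classical fact that bipartite graphs are unimodular. First I would dispose of trivial cases: if $G$ has no edges there is nothing to prove, and if $\rank(G)=1$ then every edge is a singleton, $J(G)$ is a principal monomial ideal, and $\reg J(G)^s$ is visibly linear in $s$ for all $s\geqslant 1$. So I may assume $r:=\rank(G)=2$, i.e. $G$ is an honest graph with at least one edge.

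Next I would invoke \cite[Theorem $5$]{Berge}, which says that the incidence matrix of a bipartite graph is totally unimodular; hence $G$ is a unimodular hypergraph and Theorem \ref{T2} applies. It gives a non-negative integer $e\leqslant \dim R/J(G)-d(J(G))+1$ with
$$\reg J(G)^s = d(J(G))s+e \text{ for all } s\geqslant r\left\lceil\tfrac{n}{2}\right\rceil+1 = 2\left\lceil\tfrac{n}{2}\right\rceil+1.$$

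Finally, the only thing left is the elementary inequality $2\lceil n/2\rceil \leqslant n+1$, valid for every integer $n\geqslant 1$ (it is an equality when $n$ is odd and strict when $n$ is even). Therefore $2\lceil n/2\rceil + 1 \leqslant n+2$, so every $s\geqslant n+2$ satisfies $s\geqslant 2\lceil n/2\rceil+1$, and the displayed linear formula holds for all such $s$. This completes the argument. There is essentially no obstacle here beyond checking that the threshold $r\lceil n/2\rceil+1$ from Theorem \ref{T2} specializes, at $r=2$, to something no larger than $n+2$; the substance of the corollary is entirely contained in Theorem \ref{T2}.
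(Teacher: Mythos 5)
Your proposal is correct and matches the paper's own (implicit) argument: the corollary is exactly Theorem \ref{T2} applied with $r=2$ after citing \cite[Theorem 5]{Berge} for unimodularity of bipartite graphs, together with the observation that $2\left\lceil\frac{n}{2}\right\rceil+1\leqslant n+2$. Your handling of the degenerate cases (no edges, rank one) is a harmless addition already covered in the proof of Theorem \ref{T2}.
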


\subsection*{Acknowledgment}  This work is partially supported by NAFOSTED (Vietnam) under the grant number 101.04-2015.02. The first author is also partially supported by Thai Nguyen university of Science under the grant number \DJ H2016-TN06-03.

\end{document}